\documentclass[reqno]{amsart}
\usepackage{hyperref}
\usepackage[a4paper, left=3cm, right=3cm, top=4cm, bottom=4cm]{geometry}
\vspace{9mm}

\begin{document}

\title[\hfilneg \hfil Controllability]{A Study of nonlocal fractional neutral stochastic integrodifferential inclusions of order $1<\alpha<2$ with impulses}
\author[A. Afreen, A. Raheem \&  A. Khatoon \hfil \hfilneg]
{A. Afreen$^{*}$, A. Raheem \& A. Khatoon}

\address{A. Afreen \newline
	Department  of Mathematics,
	Aligarh Muslim University,\newline Aligarh -
	202002, India.} \email{afreen.asma52@gmail.com}

\address{A. Raheem \newline Department of Mathematics,
	Aligarh Muslim University,\newline Aligarh -
	202002, India.} \email{araheem.iitk3239@gmail.com}

\address{A. Khatoon \newline
	Department  of Mathematics,
	Aligarh Muslim University,\newline Aligarh -
	202002, India.} \email{areefakhatoon@gmail.com}

\renewcommand{\thefootnote}{} \footnote{$^*$ Corresponding author:
	\url{ A. Afreen (afreen.asma52@gmail.com)}}

\subjclass[2010]{34A08, 34K35, 46D09, 47H04, 93B05  }

\keywords{Fractional differential inclusion, Approximate controllability, Fractional order cosine family, Stochastic system}

\begin{abstract} This paper considers a class of nonlocal fractional neutral stochastic integrodifferential inclusions of order $1<\alpha<2$ with impulses in a Hilbert space. We study the existence of the mild solution for the cases when
	the multi-valued map has convex and non-convex values. The results are obtained by combining fixed-point theorems with the fractional order cosine family, semigroup theory, and stochastic techniques. A new set of sufficient conditions is developed to demonstrate the approximate controllability of the system. Finally, an example is
	given to illustrate the obtained results.
\end{abstract}

\maketitle \numberwithin{equation}{section}
\newtheorem{theorem}{Theorem}[section]
\newtheorem{lemma}[theorem]{Lemma}
\newtheorem{proposition}[theorem]{Proposition}
\newtheorem{corollary}[theorem]{Corollary}
\newtheorem{remark}[theorem]{Remark}
\newtheorem{definition}[theorem]{Definition}
\newtheorem{example}[theorem]{Example}
\allowdisplaybreaks

\section{\textbf{Introduction}}
 The limitations of using traditional integer-order calculus and standard differential equations to model specific complex systems and processes are inadequate for describing various phenomena, including those found in viscoelastic systems, dielectric polarization, and electromagnetic wave propagation. This insufficiency is sometimes caused by these events exhibiting memory-like behavior or other complex features that conventional derivatives cannot adequately represent.
 Many researchers have turned to fractional derivatives as an alternative to these limitations. Fractional derivatives offer a way to extend the concept of differentiation beyond integer orders. They are a powerful approach for modeling processes with memory and complex behaviors in diverse scientific and engineering fields.
 
 In recent decades, the theory of fractional-order differential systems has attracted considerable attention mainly due to its impressive applicability in various scientific and engineering domains, such as medical model simulations and electrical engineering. The nonlocal properties of fractional differential and integral operators have made them valuable tools. It has been found that arbitrary-order fractional differential equations provide a more accurate description of the dynamic response of real-world objects. The importance of a thorough understanding of fractional calculus has increasingly been recognized \cite{a11,i1,k3}.
 
 It is well-established that many phenomena in the physical universe, including the human cardiovascular system and blood circulation, are influenced by the abrupt shift in their state after certain intervals. Impulsive differential equations describing sudden changes are used to explain these events. The study of impulsive systems is crucial to analyzing more realistic mathematical models. It has numerous applications in various fields, such as drug diffusion in the human body, population dynamics, theoretical physics, industrial robotics, mathematical economy, chemical technology, etc. Ahmed et al. \cite{HMBR}  examined the approximate controllability of noninstantaneous impulsive Hilfer fractional integrodifferential equations with fractional Brownian motion. Sivasankar and Udhayakumar \cite{SU2022} studied the approximate controllability of second-order
 neutral stochastic delay integrodifferential evolution
 inclusions with impulses. Refer to \cite{MAJVN2022,LBS1989,GSXY} for more details.

 Stochastic differential equations (SDEs) are widely used to model real-world phenomena influenced by random factors or noises. Unlike deterministic models, which describe how a system evolves based on fixed parameters, stochastic models account for the inherent randomness in the system. They incorporate the concept of Brownian motion and stochastic processes to capture the unpredictable nature of the phenomena being studied.
 
 Stochastic analysis, which deals with SDEs, has applications in various disciplines. SDEs are extensively used in financial modeling and option pricing. They provide a framework to account for random market fluctuations and uncertainty. In physics and engineering, SDEs are used in modeling systems that exhibit unexpected behavior, such as heat conduction in materials with memory. They are also applied to describe diffusion processes, particle motion in fluids, and other phenomena influenced by random forces. Stochastic models are valuable for understanding biological systems, where random fluctuations play a significant role. They can help simulate the dynamics of biochemical reactions, population growth, genetic mutations, and epidemiological processes. Chemical reactions often involve random events, such as molecular collisions. Stochastic models are employed to analyze reaction rates, reaction pathways, and the behavior of chemical systems under uncertainty. Stochastic models are essential in economic analysis and forecasting. They are used to model market dynamics, price movements, and the behavior of economic agents under uncertainty \cite{AMAAM,Hamdy,Hamdy2017,E2013,n11}.
 
 One of the fundamental concepts of contemporary control theory is the idea of a dynamical system being controllable. In general, controllability is the capability of a control system to be directed from an arbitrary initial state to a likewise arbitrary final state through the permitted set of controls. Significant consequences for the behavior of linear and nonlinear dynamical systems are drawn from this idea. Moreover,
 the approximate controllability means that the system can be steered to an arbitrarily
 small neighborhood of the final state. Approximate controllable systems are more prevalent, and very often, approximate controllability is completely adequate in applications, see \cite{AAA2022,RAK2022,RK2019,Hamdy2009,Hamdy2022,MVM2016,VPNB2021} and the references therein.  
 

 It should be pointed out that Sathiyaraj and Balasubraminaim \cite{SB2018} discussed the controllability of fractional higher-order stochastic integrodifferential inclusions:
 \begin{eqnarray*}
 	^{C}{D}^{q}x(t) \in A ^{C}{D}^{p}x(t)+B u(t)+F(t,x(t))+\displaystyle\int_{0}^{t}{G}(s,x(s))dw(s), \quad t\in[0,T],
 	\\x(0)=x_0,~ x'(0)=x'_0,\hspace{8cm}
 \end{eqnarray*}
 where $^{C}{D}^{q}$ and $^{C}{D}^{p}$ denote Caputo fractional derivatives of order $0<p\le1<q\le 2,$ $A$ and $B$ are matrices of dimensions $n \times n$ and $n \times m,$ respectively, and $x\in \mathbb{R}^n,~ u\in \mathbb{R}^m$ are the state and control vectors, respectively. The results are established using the fixed point techniques for both the convex and non-convex cases. It is noted that the study is in finite dimension.

 Sivasankar and Udhayakumar \cite{SU2022} studied the existence of solutions and the approximate controllability of the following integrodifferential evolution inclusions with infinite delay
 \begin{eqnarray*} 
 	\left \{ \begin{array}{lll} d\big[x'(t)
 		+\sigma(t,x_t)\big]\in \big[ {A}(t)x(t)+{B}u(t)+F(t,x_t)\big]dt\\
 		\hspace{4cm}+\displaystyle\int_{0}^{t}e(t-s){G}(s,x_s)dw(s),\quad t \in [0,a],~t \neq t_k,~k=1,2,\ldots,m,
 		&\\ x(0)=\phi\in L^2(\Omega, B_h), \quad t\in(-\infty,0],\quad x'(0)= \zeta,
 		&\\ \Delta x\rvert_{t=t_k}={I}_k(x_{t_k}), \quad k=1,2,\ldots,m,  &\\
 		\Delta x'\rvert_{t=t_k}={J}_k(x_{t_k}),\quad k=1,2,\ldots,m,
 	\end{array}\right.
 \end{eqnarray*}
 where $x(\cdot)$ takes values in a Hilbert space. The results are proved only for convex multivalued maps using fixed point methods.

 Several real-world systems, such as sudden stock price fluctuations brought on by war, epidemics, market crashes, etc., are subject to stochastic perturbations with impulsive effects. For these models, the path continuity supposition could be more
 plausible. Consequently, we should consider impulsive
 effects in stochastic processes when modeling such systems. Due to the numerous applications of second-order abstract differential equations in physics and engineering, they have received much more attention. Second-order differential equations, for instance, can be used to analyze the system of dynamical buckling of a hinged extensible beam \cite{S1950,WE1982}. The integrated process in continuous time, which can be made stationary, is best
 described by second-order stochastic differential equations. For instance, engineers
 can use second-order stochastic differential equations to describe mechanical
 vibrations or the charge on a capacitor. Researchers have recently obtained numerous findings about
 the controllability of second-order stochastic control systems \cite{AMAAM,SSM2019,MVU2020,SU2022,VPNB2021}.
 
 Numerous social, physical, biological, and engineering issues are characterized mainly by differential inclusions; for instance, see \cite{MAJVN2022,SU2022}. Ahmed and Ragusa \cite{HM2022} examined the nonlocal controllability of Sobolev-type conformable
 fractional stochastic evolution inclusions with Clarke
 subdifferential. Kavitha et al. \cite{KVSNU2021} studied the approximate controllability of Sobolev-type fractional neutral
 differential inclusions of Clarke subdifferential type. Subsequently, a few authors have reviewed the controllability of fractional
 stochastic differential inclusions involving Hilfer fractional derivatives—for example, Ahmed et al. \cite{HMWA2022} studied the null controllability of Hilfer fractional differential inclusions. Ahmed et al. \cite{HMWA2023} studied fractional stochastic evolution inclusions with control on the boundary. Many authors examined the existence and controllability of various systems with nonlocal conditions as their application results are more favorable than those of the classical initial condition \cite{GNR2007,SSM2019,BGN2004}. Many authors studied the various controllability of Hilfer fractional
 differential systems \cite{Hamdy2014,AW2018,HMAM2019}. To the authors' knowledge, the existence of mild solutions of nonlocal fractional neutral stochastic integrodifferential inclusions of order $1<\alpha<2$ with impulses in Hilbert spaces has not been addressed for the convex and non-convex case of the multivalued map. This work aims to address this gap.

 In light of the above discussion, we consider the following nonlocal fractional neutral stochastic integrodifferential inclusions with impulses
 \begin{eqnarray} \label{1.1}
 	\left \{ \begin{array}{lll} {}^C{D}^{\alpha}\big[\zeta(x)
 		+f_1\big(x,\zeta(x),\zeta(\nu_1(x))\big)\big]\in  \mathcal {A}\zeta(x)+\mathcal{B}u(x)+f_2\big(x,\zeta(x),\zeta(\nu_2(x))\big)\\
 		\hspace{1.4cm}+\displaystyle\int_{0}^{x}\varrho(x-y)\mathcal{G}\big(y,\zeta(y),\zeta(\nu_3(y))\big)dw(y),\quad x \in [0,b],~x \neq x_p,~p=1,2,\ldots,n,
 		&\\ \zeta(x)=\phi(x)+h(\zeta)(x), \quad x\in[-a,0],\quad \zeta'(0)= \xi \in \mathcal{H},
 		&\\ \Delta \zeta(x_p)=\hat{I}_p(\zeta(x_p)), \quad p=1,2,\ldots,n,  &\\
 		\Delta \zeta' (x_p)=\hat{J}_p(\zeta(x_p)),\quad p=1,2,\ldots,n,
 		
 	\end{array}\right.
 \end{eqnarray}
 where $0<a, b<\infty,$ $1<\alpha< 2$, $^{C}{D}^{\alpha}$ represents fractional derivative in the Caputo sense and $\zeta(\cdot)$ takes values in a real separable Hilbert space $\mathcal{H}$ with inner product $(\cdot,\cdot)$ and norm $\|\cdot\|.$ A densely defined closed linear operator $\mathcal{A}:\mathcal{D(A)}\subset \mathcal{H}\rightarrow \mathcal{H}$ generates a strongly continuous cosine family $\{C(x)\}_{x\geq 0}$ in $\mathcal{H}$. $\mathcal{B}$ is bounded linear operator from a real separable Hilbert space $\mathcal{U}$ to $\mathcal{H},$ and $u(\cdot)\in L^2([0,b],\mathcal{U}),$ a Hilbert space of admissible control functions. Let $\mathcal{W}$ be another real separable Hilbert space with inner product $(\cdot,\cdot)_{\mathcal{W}}$ and norm $\|\cdot\|_{\mathcal{W}}.$ Let $w$ be a $\mathcal{W}$-valued Wiener process with a finite trace nuclear covariance operator $Q\geq 0$ on a complete probability space $(\Omega, \Upsilon,\Upsilon_x, P),$ where $\Upsilon_x\subset \Upsilon,$ $x \in [0,b]$ is a normal filtration. $\Upsilon_x$ is a right continuous increasing family and $\Upsilon_0$ contains all $P$-null sets. Also, let $L_2^{0}=L_2\big(Q^{1/2}\mathcal{W},\mathcal{H}\big),$  the space of all Hilbert-Schmidt operators from $Q^{1/2}\mathcal{W}$ to~$\mathcal{H}$ be a separable Hilbert space with the norm $\|\psi\|^2_{Q}=tr[\psi Q \psi^*].$ The maps $f_1, f_2:[0,b] \times \mathcal{H}^2\to \mathcal{H},$ $\mathcal{G}:[0,b] \times \mathcal{H}^2\to 2^{L(\mathcal{W},\mathcal{H})}\setminus \{\emptyset\},$ $\varrho:[0,b]\rightarrow \mathbb{R}$ and
 $\hat{I}_p, \hat{J}_p:\mathcal{H}\rightarrow \mathcal{H}$ are appropriate functions. $0=x_0<x_1<x_2<\cdots<x_n<x_{n+1}=b$ are impulse points and $\Delta \zeta(x_p)=\zeta(x_p^+)-\zeta(x_p^-)$ denotes the jump of a function $\zeta$ at $x_p.$ For $i=1,2,3,$ the functions $\nu_i:[0,b] \rightarrow [-a,b]$ are 
 continuous such that $0 \leq \nu_i(x) \leq x,$ 
 $x \in[0,b]$.

 The set of all strongly measurable, square integrable $\mathcal{H}$-valued random variables, denoted by $L_2(\Omega, \mathcal{H}),$ is a Banach space with norm $\|\zeta\|_{L_2}=\Big({E}\|\zeta(x)\|^2\Big)^{\frac{1}{2}}.$ Let $C([0,b],L_2(\Omega, \mathcal{H}))$ be the Banach space of all continuous maps from $[0,b]$ to $L_2(\Omega, \mathcal{H})$ satisfying $\sup\limits_{x\in[0,b]}{E}\|\zeta(x)\|^2<\infty.$ A significant subspace is given by $L_2^0(\Omega, \mathcal{H})=\{\Upsilon \in L_2(\Omega, \mathcal{H}): \Upsilon ~\mbox{is}~ \Upsilon_0\mbox{-measurable}\}.$

 We define the space	$\mathcal{PC}_{0}=C([-a,0], \mathcal{H}):$ The space of all $\Upsilon_0$-adapted, $\mathcal{H}$-valued measurable stochastic processes $\zeta:[-a,0]\rightarrow \mathcal{H},$ which are continuous everywhere, forms a Banach space with the norm  $\| \zeta \|_{\mathcal{PC}_0}=\sup\limits_{x \in [-a,0]}{E}\Big( \|\zeta(x)\|^2 \Big)^{\frac{1}{2}}.$
 
 \noindent
 $\mathcal{PC}_{b}=PC([-a,b], \mathcal{H}):$ The space of all $\Upsilon_x$-adapted, $\mathcal{H}$-valued measurable stochastic processes $\zeta:[-a,b]\rightarrow \mathcal{H},$ which are continuous everywhere except the points $x_p\in (0,b), ~p=1,2,\ldots,n.$ Moreover, right limits $\zeta(x_p^+)$ and left limits $\zeta(x_p^-)$ of $\zeta$ exist at $x_p,~p=1,2,\ldots,n,$ and $\zeta$ is left continuous, i.e., $\zeta(x_p^-)=\zeta(x_p).$ The space $\mathcal{PC}_{b}$ is a Banach space equipped with the norm $\| \zeta \|_{\mathcal{PC}_b}=\sup\limits_{x \in [-a,b]}{E}\Big( \|\zeta(x)\|^2 \Big)^{\frac{1}{2}}.$ Similarly, $\mathcal{PC}=PC([0,b], \mathcal{H})$ is a Banach space equipped with the norm $\| \zeta \|_{\mathcal{PC}}=\sup\limits_{x \in [0,b]}{E}\Big( \|\zeta(x)\|^2 \Big)^{\frac{1}{2}}.$\\
 \noindent
 Let  $\mathcal{PC}^1_{b}$ be the space of all functions $\zeta \in \mathcal{PC}_{b},$ which are continuously differentiable on $[0,b]-\{x_1,x_2,\ldots , x_p\},$ and the lateral derivatives 
 $$\zeta'_R(x)=\lim\limits_{y\rightarrow 0+}\frac{\zeta(x+y)-\zeta(x^+)}{y} ~~\mbox{and}~~ \zeta'_L(x)=\lim\limits_{y\rightarrow 0-}\frac{\zeta(x+y)-\zeta(x^-)}{y}$$
 are continuous on $[0,b)$ and $(0,b],$ respectively. Clearly, the space $\mathcal{PC}^1_{b}$ is a Banach space with
 \begin{eqnarray*}
 	\| \zeta \|_{\mathcal{PC}^1_b}=\max	\bigg\{\sup\limits_{x \in [-a,b]}\| \zeta \|_{\mathcal{PC}_b}, \sup\limits_{x \in [-a,b]}\| \zeta' \|_{\mathcal{PC}_b}\bigg\}.
 \end{eqnarray*} 
 
 The main contributions of the current work:
 \begin{itemize}
 	\item A class of nonlocal fractional neutral stochastic integrodifferential inclusions of order $1<\alpha<2$ with impulses is introduced.
 	\item We show the existence of mild solution of the system (\ref{1.1}) for both the convex and non-convex values of multivalued maps.
 	\item We establish a set of sufficient conditions that demonstrate the approximate controllability of the system (\ref{1.1}).
 	\item An example is demonstrated to validate the developed theoretical results.
 \end{itemize}
\section{\textbf{Preliminaries and Assumptions}}
\noindent We use the following notations.\\
$P(\mathcal{H})=\{V\in \mathcal{H}: V\neq \emptyset\}$, $P_{cl}(\mathcal{H})=\{V\in P(\mathcal{H}): V ~\mbox{is closed}\}$, $P_{bd}(\mathcal{H})=\{V\in P(\mathcal{H}): V ~\mbox{is bounded}\}$, $P_{cp}(\mathcal{H})=\{V\in P(\mathcal{H}): V ~\mbox{is compact}\}$, $P_{cv}(\mathcal{H})=\{V\in P(\mathcal{H}): V ~\mbox{is convex}\}$, $P_{bd, cl, cv}(\mathcal{H})=\{V\in P(\mathcal{H}): V ~\mbox{is bounded, closed and convex}\}.$\\

\noindent For the sake of convenience in writing, we always set $r=\frac{\alpha}{2},$ for $\alpha\in(1,2).$
\begin{definition} \label{def4} \cite{ZH2019} An $\Upsilon_x$-adapted stochastic process $\zeta: [-a,b] \rightarrow \mathcal{H}$ is called a mild solution of the system (\ref{1.1}) if $\zeta(x)=\phi(x)+h(\zeta)(x),~ x\in[-a,0],~ \zeta'(0)= \xi,$ $u(\cdot) \in L^2([0,b],\mathcal{U}),$ $ \Delta \zeta(x_p)=\hat{I}_p(\zeta(x_p)),~  \Delta \zeta'(x_p)=\hat{J}_p(\zeta(x_p)), ~p=1,2,\ldots, n;$ $\zeta(\cdot)_{[0,b]} \in \mathcal{PC}^1_{b} $ and satisfies
	\begin{eqnarray} \label{2.11}
		\zeta(x)&=&C_{r}(x)[\phi(0)+h(\zeta)]+S_{r}(x)\big[\xi+f_1\big(0,\zeta(0),\zeta(\nu_1(0))\big)\big]\nonumber\\&& \quad -\int_{0}^{x}C_{r}(x-y)f_1\big(y,\zeta(y),\zeta(\nu_1(y))\big)dy +\int_{0}^{x}P_{r}(x-y)\mathcal{B} u(y)dy\nonumber\\&& \quad +\int_{0}^{x}P_{r}(x-y)f_2\big(y,\zeta(y),\zeta(\nu_2(y))\big)dy+\int_{0}^{x}P_{r}(x-y)\int_{0}^{y} \varrho(y-\tau)g(\tau)dw(\tau)dy \nonumber\\&& \quad+\sum \limits_{0 < x_p< x}C_{r}(x-x_p)\hat{I}_p(\zeta(x_p))+\sum \limits_{0 < x_p< x}S_{r}(x-x_p)\hat{J}_p(\zeta(x_p)),
	\end{eqnarray}
	where $g\in S_{\mathcal{G},\zeta}=\big\{g\in L^2(L(\mathcal{W},\mathcal{H})):g(x)\in \mathcal{G}\big(y,\zeta(y),\zeta(\nu_3(y))\big)~ \mbox{a.e.}~ x\in [0,b] \big\}$ is a selection of $\mathcal{G},$ $C_r(x)=\displaystyle\int_0^\infty M_r(\theta)C(x^r\theta)d\theta,$ $S_r(x)=\displaystyle\int_0^xC_r(s)ds$, $P_r(x)=\displaystyle\int_0^\infty r\theta M_r(\theta)S(x^r\theta)d\theta,$ and $M_r(\theta)$ is the Mainardi's Wright-type function.
\end{definition}

From \cite{BM1999}, we introduce the controllability operator as follows:
$$\mu_0^{b}=\displaystyle\int_0^bP_{r}(b-s)\mathcal{B} \mathcal{B}^*P^*_{r}(b-s)ds:\mathcal{H}\to \mathcal{H},$$
where $*$ denotes the adjoint. Clearly, $\mu_0^{b}\geq 0$ and hence, the resolvent
$$\mathcal{R}\big(\delta, -\mu_0^{b}\big)=(\delta I+\mu_0^b)^{-1}:\mathcal{H}\to \mathcal{H},$$ is well defined for all $\delta>0.$\\

Consider $H_d:\mathcal{P}(\mathcal{H})\times\mathcal{P}(\mathcal{H}) \to \mathbb{R}^+\cup\{\infty\}$ given by
\begin{eqnarray*}
	H_d(R,S)=\max\bigg\{\sup\limits_{a\in R}d(a,S),\sup\limits_{b\in S}d(R,b)\bigg\},
\end{eqnarray*}
where $d(R,b)=\inf\limits_{a\in R}d(a,b).$ Then, $(P_{cl}(\mathcal{H}),H_d)$ is a generalized metric space.

\begin{theorem}\label{lm4}\cite{M1975}
	Let $\mathcal{H}$ be a Hilbert space and $\mathcal{K}:\mathcal{H} \to P_{bd, cl, cv}(\mathcal{H})$ be an upper semi continuous (u.s.c.) and condensing map. If 
		$$\mathcal{M}=\{\zeta\in \mathcal{H}:\lambda \zeta \in \mathcal{K}\zeta ~\mbox{for some}~ \lambda>1\},$$ is bounded, then $\mathcal{K}$ has a fixed point.
\end{theorem}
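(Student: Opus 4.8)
The plan is to deduce the statement from topological degree (fixed-point index) theory for upper semicontinuous condensing multivalued maps with bounded, closed, convex values; its construction and the three properties I will use—normalization, homotopy invariance, and the existence (solution) property—are classical. Throughout, write $\alpha$ for the Kuratowski measure of noncompactness, so that \emph{condensing} means $\alpha(\mathcal{K}(D))<\alpha(D)$ for every bounded $D\subset\mathcal{H}$ with $\alpha(D)>0$. Since $\mathcal{M}$ is bounded, I would first fix $R>0$ so large that $\mathcal{M}\subset B:=\{\zeta\in\mathcal{H}:\|\zeta\|<R\}$, and write $\partial B=\{\zeta\in\mathcal{H}:\|\zeta\|=R\}$; we may take $R>0$.

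Next I would introduce the homotopy $\Phi:[0,1]\times\overline{B}\to P_{bd,cl,cv}(\mathcal{H})$ defined by $\Phi(t,\zeta)=t\,\mathcal{K}(\zeta)$. The decisive step is to check that $\Phi$ has no fixed point on $\partial B$ unless $\mathcal{K}$ already has one: suppose $\zeta\in\partial B$ and $\zeta\in t\,\mathcal{K}(\zeta)$. If $t=0$ then $\zeta=0$, contradicting $\|\zeta\|=R>0$; if $0<t<1$ then $\lambda\zeta\in\mathcal{K}(\zeta)$ with $\lambda=1/t>1$, so $\zeta\in\mathcal{M}\subset B$, again contradicting $\|\zeta\|=R$; and if $t=1$ then $\zeta\in\mathcal{K}(\zeta)$ is a fixed point and we are done. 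Hence either $\mathcal{K}$ has a fixed point on $\partial B$, or $\Phi(t,\cdot)$ is fixed-point-free on $\partial B$ for every $t\in[0,1]$, so the index of each $\Phi(t,\cdot)$ on $B$ is defined. I would also verify that $\Phi$ is admissible: it is upper semicontinuous as the product of the continuous scalar multiplication with the u.s.c. map $\mathcal{K}$, and it is condensing because $\Phi([0,1]\times D)=[0,1]\cdot\mathcal{K}(D)\subset\overline{\mathrm{conv}}(\{0\}\cup\mathcal{K}(D))$, whence $\alpha(\Phi([0,1]\times D))\le\alpha(\mathcal{K}(D))<\alpha(D)$ whenever $\alpha(D)>0$.

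Granting this, homotopy invariance gives $\mathrm{ind}(\mathcal{K},B)=\mathrm{ind}(\Phi(1,\cdot),B)=\mathrm{ind}(\Phi(0,\cdot),B)$, and $\Phi(0,\cdot)$ is the constant map $\zeta\mapsto\{0\}$ with $0\in B$, whose index is $1$ by normalization. Thus $\mathrm{ind}(\mathcal{K},B)=1\neq0$, and the existence property of the index produces a fixed point of $\mathcal{K}$ in $B$. The main obstacle is not the homotopy bookkeeping above but the appeal to a degree/index theory for condensing multivalued maps with convex values; I would either invoke the established index (built, e.g., by approximating condensing maps with compact ones and combining the Bohnenblust--Karlin/Kakutani--Fan fixed-point theorem with the measure-of-noncompactness estimates) or, alternatively, bypass degree theory through a Sadovskii-type reduction: exhibit a nonempty closed convex set on which $\mathcal{K}$ acts compactly and apply a multivalued Schauder theorem. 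The delicate point in the latter route is that the radial retraction onto $\overline{B}$ fails to preserve convexity, which is precisely why the index-theoretic formulation is the cleaner one to carry out.
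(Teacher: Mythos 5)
The paper does not prove this statement: it is imported verbatim, with a citation to Martelli's 1975 paper, as a known fixed-point principle (a Rothe/Leray--Schauder-type alternative for condensing multimaps), and it is then only \emph{applied} in Theorem~\ref{th1} via the boundedness of the set $\mathcal{Y}$. So there is no in-paper argument to compare yours against. On its own terms, your proposal is correct and is essentially the standard proof of this result: the trichotomy on $\partial B$ for the homotopy $\Phi(t,\zeta)=t\,\mathcal{K}(\zeta)$ is exactly right (the case $0<t<1$ is where the hypothesis on $\mathcal{M}$ enters, and $t=1$ is the escape clause), the estimate $\alpha\bigl([0,1]\cdot\mathcal{K}(D)\bigr)\le\alpha\bigl(\overline{\mathrm{conv}}(\{0\}\cup\mathcal{K}(D))\bigr)=\alpha(\mathcal{K}(D))$ correctly shows the homotopy stays condensing, and normalization at the constant map $\{0\}$ closes the argument. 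You are also right to identify the real content as the existence of a fixed-point index for u.s.c.\ condensing multimaps with closed bounded convex values; that theory is established in the literature (and Martelli's original version is in fact more general, allowing acyclic rather than convex values), so invoking it is legitimate for a statement the paper itself treats as a citation. Your closing remark about the radial retraction destroying convexity, and hence the preference for the index-theoretic formulation over a naive Sadovskii reduction, is a fair assessment of where the alternative route gets delicate.
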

\begin{lemma}\label{lm5}\cite{CN1970} Let $(\mathcal{H},d)$ be a complete metric space. If $\mathcal{K}:\mathcal{H} \to \mathcal{P}_{cl}(\mathcal{H})$ is a contraction, then $\mathcal{K}$ has a fixed point.
	
\end{lemma}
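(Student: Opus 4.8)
The plan is to adapt the iteration underlying the Banach contraction principle to the set-valued setting, exploiting the Hausdorff metric $H_d$ introduced above. Recall that $\mathcal{K}$ being a contraction means there is a constant $\gamma\in[0,1)$ with $H_d(\mathcal{K}z,\mathcal{K}z')\le \gamma\, d(z,z')$ for all $z,z'\in\mathcal{H}$. The one genuinely new subtlety compared with the single-valued case is that the distance $d(z,\mathcal{K}z)=\inf_{w\in\mathcal{K}z}d(z,w)$ need not be attained, so I cannot simply ``pick the nearest point.'' To handle this I would fix a number $q$ with $1<q<1/\gamma$ (so that $k:=q\gamma<1$) and use the elementary fact that for any $z\in\mathcal{H}$, any nonempty closed set $B$ with $d(z,B)<\infty$, and any $q>1$, there exists $w\in B$ with $d(z,w)\le q\,d(z,B)$.

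First I would choose an arbitrary $z_0\in\mathcal{H}$ and any $z_1\in\mathcal{K}z_0$, and then build a sequence $\{z_n\}$ inductively. Given $z_n\in\mathcal{K}z_{n-1}$, the definition of $H_d$ yields
\begin{equation*}
d(z_n,\mathcal{K}z_n)\le H_d(\mathcal{K}z_{n-1},\mathcal{K}z_n)\le \gamma\, d(z_{n-1},z_n),
\end{equation*}
so by the selection fact above I can pick $z_{n+1}\in\mathcal{K}z_n$ with
\begin{equation*}
d(z_n,z_{n+1})\le q\, d(z_n,\mathcal{K}z_n)\le q\gamma\, d(z_{n-1},z_n)=k\, d(z_{n-1},z_n).
\end{equation*}
Iterating gives $d(z_n,z_{n+1})\le k^{\,n}\,d(z_0,z_1)$, and since $k<1$ the usual geometric-series estimate shows that $\{z_n\}$ is Cauchy; completeness of $(\mathcal{H},d)$ then provides a limit $z^\ast=\lim_{n\to\infty}z_n$.

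Finally I would verify $z^\ast\in\mathcal{K}z^\ast$. Using the triangle inequality together with $z_{n+1}\in\mathcal{K}z_n$ and the contraction property,
\begin{equation*}
d(z^\ast,\mathcal{K}z^\ast)\le d(z^\ast,z_{n+1})+H_d(\mathcal{K}z_n,\mathcal{K}z^\ast)\le d(z^\ast,z_{n+1})+\gamma\, d(z_n,z^\ast).
\end{equation*}
Letting $n\to\infty$, both terms on the right vanish, so $d(z^\ast,\mathcal{K}z^\ast)=0$; because $\mathcal{K}z^\ast\in\mathcal{P}_{cl}(\mathcal{H})$ is closed, this forces $z^\ast\in\mathcal{K}z^\ast$, the desired fixed point. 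The only real obstacle is the non-attainment of the infimum defining $d(z,\mathcal{K}z)$; the device of the auxiliary constant $q\in(1,1/\gamma)$ (equivalently, inserting a summable sequence of errors $\varepsilon_n$ with $\sum_n\varepsilon_n<\infty$ at each selection step) is exactly what keeps the one-step contraction ratio strictly below $1$ and makes the whole construction go through.
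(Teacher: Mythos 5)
Your proof is correct, and it is essentially the classical argument of Covitz and Nadler; the paper itself offers no proof of this lemma, simply citing \cite{CN1970}, so there is nothing to diverge from. The one point worth making explicit is the degenerate case $d(z_n,\mathcal{K}z_n)=0$ at some stage: there the selection inequality $d(z_n,z_{n+1})\le q\,d(z_n,\mathcal{K}z_n)$ forces $z_{n+1}=z_n$, and closedness of $\mathcal{K}z_n$ already gives $z_n\in\mathcal{K}z_n$, so the iteration terminates with a fixed point — your use of the non-strict inequality together with $\mathcal{K}z\in\mathcal{P}_{cl}(\mathcal{H})$ covers this, but it deserves a sentence.
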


Consider the following assumptions.
\begin{itemize}
	\item [(A1)] The operator $C_r(x),~ x>0$ is compact. Moreover, there exist constants $M_1\geq 1, M_2=M_1b, M_3=\dfrac{M_1b^{r}}{\Gamma(2r)},  M_4>0$ such that
	\begin{eqnarray*}
		\|C_r(x)\|^2\leq M_1, \quad 	\|S_r(x)\|^2\leq M_2, \quad 	\|P_r(x)\|^2\leq M_3,\quad \|\mathcal{B}\|^2\leq M_4.
	\end{eqnarray*}
	\item[(A2)] The function $f_1:[0,b]\times\mathcal{PC}\times \mathcal{PC}_{0} \to \mathcal{H} $ is continuous and there exist positive constants $L_{f_1},k_1, k_2$ such that 
	\begin{itemize}
		\item[(i)] $E\left\|f_1\big(x, \zeta_1(x), \zeta_1(\nu_1(x))\big)-f_1\big(x, \zeta_2(x), \zeta_2(\nu_1(x))\big)\right\|^2\\\leq L_{f_1}E\big(\|\zeta_1(x)-\zeta_2(x)\|^2+\|\zeta_1(\nu_1(x))-\zeta_2(\nu_1(x))\|^2\big),$
		\item[(ii)] $E\left\|f_1\big(x, \zeta(x), \zeta(\nu_1(x))\big)\right\|^2\leq k_1E\big(1+\|\zeta(x)\|^2+\|\zeta(\nu_1(x))\|^2\big),$
		\item[(iii)] $E\|f_1\big(0,\zeta(0),\zeta(\nu_1(0))\big)\|^2=k_2.$
	\end{itemize}
\item[(A3)] The  function $f_2:[0,b]\times\mathcal{PC}\times \mathcal{PC}_{0} \to \mathcal{H} $ satisfies the following
\begin{itemize}
	\item [(i)] $x\mapsto f_2\big(x, \zeta(x), \zeta(\nu_2(x))\big)$ is measurable for $\zeta(x) \in\mathcal{PC} , \zeta(\nu_2(x)) \in \mathcal{PC}_0,$
	\item[(ii)] $\big(\zeta(x), \zeta(\nu_2(x))\big)\mapsto f_2\big(x, \zeta(x), \zeta(\nu_2(x))\big)$ is continuous for almost all $x \in[0,b],$
	\item[(iii)] there exists $L_{f_2}>0$ such that\\
	$E\left\|f_2\big(x, \zeta_1(x), \zeta_1(\nu_2(x))\big)-f_2\big(x, \zeta_2(x), \zeta_2(\nu_2(x))\big)\right\|^2\\\leq L_{f_2}E\big(\|\zeta_1(x)-\zeta_2(x)\|^2+\|\zeta_1(\nu_2(x))-\zeta_2(\nu_2(x))\|^2\big),$
	\item[(iv)] for almost every $x\in[0,b],$ $\zeta(x) \in\mathcal{PC}$ and $ \zeta(\nu_2(x)) \in \mathcal{PC}_0$ such that
	\begin{eqnarray*}
			E\left\|f_2\big(x, \zeta(x), \zeta(\nu_2(x))\big)\right\|^2\leq \Theta(x)\pounds \Big(E\|\zeta(x)\|^2+E\|\zeta(\nu_2(x))\|^2\Big),
	\end{eqnarray*}
	where $\Theta \in L^1([0,b], \mathbb{R}^+)$ and $\pounds:\mathbb{R}^+ \to (0,\infty)$ is continuous and increasing function.
	\end{itemize}
	\item[(A4)] The real valued function $\varrho$ is continuous on $[0,b]$ and there exists $\ell>0$ such that $\|\varrho(x)\|^2\leq \ell$ for $x\in [0,b].$
	\item[(A5)] $\mathcal{G}$ is a multivalued map satisfying $\mathcal{G}:[0,b] \times \mathcal{PC}\times \mathcal{PC}_{0} \to P_{bd, cl, cv}L(\mathcal{W},\mathcal{H})$ is measurable with respect to $x$ for each fixed $\zeta(x) \in\mathcal{PC}$ and $ \zeta(\nu_3(x)) \in \mathcal{PC}_0,$ u.s.c. with respect to $\zeta(x) \in\mathcal{PC}$ and $ \zeta(\nu_3(x)) \in \mathcal{PC}_0$ for each $x \in[0,b]$, and for each fixed $\zeta \in L_2(L(\mathcal{W},\mathcal{H})),$ the selection 
	$$ S_{\mathcal{G},\zeta}=\big\{g\in L^2(L(\mathcal{W},\mathcal{H})):g(x)\in (\hat {\mathcal{G}}\zeta)(x)= \mathcal{G}\big(x,\zeta(x),\zeta(\nu_3(x))\big)~ \mbox{a.e.}~ x\in [0,b] \big\}$$ is nonempty.
	\item[(A6)] There exists $\wp\in L^1([0,b], \mathbb{R}^+)$ such that
	\begin{eqnarray*}
		\int_0^x E\|\mathcal{G}\big(y,\zeta(y),\zeta(\nu_3(y))\big)\|^2_{L_2^0} dy&=&\sup\limits_{x\in[0,b]} \bigg\{\int_0^x E\|g(y)\|^2 dy: g \in(\hat {\mathcal{G}}\zeta)(y) \bigg\}\\&\leq& \wp(x)\beta\Big(E\|\zeta(x)\|^2+E\|\zeta(\nu_3(x))\|^2\Big),
	\end{eqnarray*}
where $\beta:\mathbb{R}^+ \to (0,\infty)$ is continuous and increasing function.
\item[(A7)] The functions $\hat{I}_p, \hat{J}_p:\mathcal{H}\rightarrow \mathcal{H}$ are continuous functions and there exist $L_{\hat I_p}, L_{\hat J_p}>0$ such that
$E\left\|\hat{I}_p(\zeta(x_p))\right\|^2\leq L_{\hat{I}_p}, \quad 	E\left\|\hat{J}_p(\zeta(x_p))\right\|^2\leq L_{\hat{J}_p}$ and
	$$	E\left\|\hat{I}_p(\zeta_1(x_p))-\hat{I}_p(\zeta_2(x_p))\right\|^2\leq L_{\hat{I}_p}\|\zeta_1-\zeta_2\|^2,$$
	$$	E\left\|\hat{J}_p(\zeta_1(x_p))-\hat{J}_p(\zeta_2(x_p))\right\|^2\leq L_{\hat{J}_p}\|\zeta_1-\zeta_2\|^2,$$
for any $\zeta_1, \zeta_2 \in \mathcal{H}$ and $p=1,2,\ldots,n.$
\item[(A8)] $h$ is completely continuous and $L_h>0$ such that
\begin{itemize}
	\item [(i)]
	$E\left\|h(\zeta)-h(\tilde\zeta)\right\|^2\leq L_h \|\zeta-\tilde\zeta\|^2,$
\item[(ii)] $E\left\|h(\zeta)\right\|^2\leq L_h\big(1+ \|\zeta\|^2),$
\end{itemize}
for all $\zeta, \tilde{\zeta} \in \mathcal{H}.$
\item[(A9)] The resolvent operator $(\delta I+\mu_0^b)^{-1}$ satisfies the following conditions:
\begin{itemize} 
	\item[(i)]
	$\big\|(\delta I+\mu_0^b)^{-1}\big\| \leq \frac{1}{\delta},\quad \mbox{for all}~ \delta \geq 0.$
	\item[(ii)] $\delta(\delta I+\mu_0^b)^{-1} \rightarrow 0$ in the strong operator topology as $\delta\rightarrow 0^+.$
\end{itemize}
	\item[(A10)] The multivalued map $\mathcal{G}:[0,b] \times \mathcal{PC}\times \mathcal{PC}_{0} \rightarrow P_{cp}\big(L(\mathcal{W},\mathcal{H})\big)$ has the property that\\ $\mathcal{G}\big(\cdot,\zeta(y),\zeta(\nu_3(y))\big):[0,b] \to P_{cp}\big(L(\mathcal{W},\mathcal{H})\big)$ is measurable for each $\zeta(y)\in \mathcal{PC},~\zeta(\nu_3(y)) \in \mathcal{PC}_{0}.$
	\item[(A11)] There exists $\hat \wp\in L^1([0,b], \mathbb{R}^+)$ such that
	\begin{eqnarray*}
	H_d\big((\hat{\mathcal{G}}\zeta_1)(x),(\hat{\mathcal{G}}\zeta_2)(x)\big)&\leq& \hat \wp(x)\|\zeta_1-\zeta_2\|^2,
	\end{eqnarray*}
and $d\big(0, \mathcal{G}(x,0,0)\big)\leq \hat \wp(x)$ for a.e. $x\in [0,b].$
\end{itemize}

From \cite{BM1999}, it can be observed that (A9)(ii) satisfies if and only if the corresponding linear deterministic system of (\ref{1.1}) is approximately controllable on $[0,b].$

\begin{lemma}\label{lm1} \cite{n11} Let  $\chi: [0,b] \times \Omega \rightarrow L_2^0$ be strongly measurable mapping such that\\ $\displaystyle\int\limits_{0}^{b}{E} \|\chi(s)\|_{L_2^0}^pds < \infty.$ Then
	$${E} \left\| \int\limits_{0}^{t}\chi(s)dw(s) \right \|^p \leq C_p \int\limits_{0}^{t} {E} \|\chi(s)\|_{L_2^0}^pds,$$
	for all $0 \leq t \leq b$ and $p \geq 2,$ where $C_p$ is the constant involving $p$ and $b.$
\end{lemma}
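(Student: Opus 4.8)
The plan is to read this as the Burkholder--Davis--Gundy (BDG) moment inequality specialized to the Hilbert-space-valued stochastic integral against the $Q$-Wiener process $w$, followed by an elementary Hölder estimate in the time variable that converts the $L_2^0$-norm appearing in the quadratic variation into the $p$-th power demanded on the right-hand side. The integrability hypothesis $\int_0^b E\|\chi(s)\|_{L_2^0}^p\,ds<\infty$ (with $p\ge 2$) guarantees in particular that $\int_0^b E\|\chi(s)\|_{L_2^0}^2\,ds<\infty$, so the integral is well defined and the relevant moments are finite.

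First I would set $M_t=\int_0^t\chi(s)\,dw(s)$ and recall from the construction of the stochastic integral that $M$ is a continuous, $\mathcal{H}$-valued, $\Upsilon_x$-martingale whose quadratic variation is $\langle M\rangle_t=\int_0^t\|\chi(s)\|_{L_2^0}^2\,ds$; this is the It\^o isometry in its quadratic-variation form and uses that $\|\chi(s)\|_{L_2^0}^2=tr[\chi(s)Q\chi(s)^*]$. Then I would invoke the BDG inequality for continuous $\mathcal{H}$-valued martingales: for each $p\ge 2$ there is a constant $c_p$ with
$$E\Big[\sup_{0\le s\le t}\|M_s\|^p\Big]\le c_p\,E\big[\langle M\rangle_t^{p/2}\big]=c_p\,E\Big[\Big(\int_0^t\|\chi(s)\|_{L_2^0}^2\,ds\Big)^{p/2}\Big].$$

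Next I would apply Hölder's inequality on $[0,t]$ with conjugate exponents $\frac{p}{p-2}$ and $\frac{p}{2}$ to the inner integral, namely $\int_0^t\|\chi(s)\|_{L_2^0}^2\,ds\le t^{\,\frac{p-2}{p}}\big(\int_0^t\|\chi(s)\|_{L_2^0}^p\,ds\big)^{2/p}$, so that raising to the power $p/2$ and taking expectations yields
$$E\big[\langle M\rangle_t^{p/2}\big]\le t^{\,\frac{p-2}{2}}\int_0^t E\|\chi(s)\|_{L_2^0}^p\,ds\le b^{\,\frac{p-2}{2}}\int_0^t E\|\chi(s)\|_{L_2^0}^p\,ds.$$
Since $\|M_t\|^p\le\sup_{0\le s\le t}\|M_s\|^p$, chaining the two displays gives the claim with $C_p:=c_p\,b^{(p-2)/2}$, a constant depending only on $p$ and $b$, exactly as asserted.

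The main obstacle is the first step: it is not a computation but the infinite-dimensional martingale machinery—that the $Q$-Wiener integral is a genuine $L^p$-martingale with quadratic variation $\int_0^{\cdot}\|\chi\|_{L_2^0}^2\,ds$—together with the correct form of BDG for $\mathcal{H}$-valued processes. Everything after that is the routine Hölder step. Since the result is quoted from \cite{n11}, I would either cite the BDG inequality in this precise setting or reduce to the scalar BDG applied coordinatewise via the eigenexpansion $w(t)=\sum_n\sqrt{\lambda_n}\,\beta_n(t)e_n$ of the $Q$-Wiener process and then reassemble using Hilbert--Schmidt additivity of $\|\chi(s)\|_{L_2^0}^2$.
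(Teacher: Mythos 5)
Your proof is correct and follows the standard route (the Burkholder--Davis--Gundy inequality for the continuous $\mathcal{H}$-valued martingale $M_t=\int_0^t\chi(s)\,dw(s)$ with quadratic variation $\int_0^t\|\chi(s)\|_{L_2^0}^2\,ds$, followed by H\"older in time, giving $C_p=c_p\,b^{(p-2)/2}$); the paper itself offers no proof, simply quoting the lemma from \cite{n11}, where essentially this argument is what underlies the statement. Your remark that the hypothesis with $p\ge 2$ forces $\int_0^b E\|\chi(s)\|_{L_2^0}^2\,ds<\infty$ on the finite interval $[0,b]$ closes the only potential gap (namely that $M$ is a genuine square-integrable martingale so that BDG applies), so nothing is missing.
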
 
\begin{lemma}\cite{n11}
	For any $\bar{\zeta}_b \in L^2(\Upsilon_b, \mathcal{H}),$ there exists $\varphi \in L^2_{\Upsilon}\big(\Omega, L^2([0,b],L_2^0)\big)$ such that $\bar{\zeta}_b=E\bar{\zeta}_b+\displaystyle\int_{0}^{b} \varphi(y)dw(y).$
\end{lemma}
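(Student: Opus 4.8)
The statement is the It\^o--martingale representation theorem for an $\mathcal{H}$-valued, $\Upsilon_b$-measurable, square-integrable random variable, in the setting where the normal filtration $\{\Upsilon_x\}$ is the one generated (and augmented) by the $Q$-Wiener process $w$. The plan is to reduce the assertion to a representation for square-integrable martingales and then read off the claim from the boundary values at $x=0$ and $x=b$.

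First I would pass to the martingale $M_x = E[\bar{\zeta}_b \mid \Upsilon_x]$, $x \in [0,b]$. Since $\bar{\zeta}_b \in L^2(\Upsilon_b, \mathcal{H})$, the process $M_x$ is a right-continuous (indeed continuous, the driving noise being Wiener) square-integrable $\mathcal{H}$-valued $\{\Upsilon_x\}$-martingale. Two boundary identifications are immediate: because $\Upsilon_0$ contains only $P$-null sets, conditioning on $\Upsilon_0$ collapses to the expectation, so $M_0 = E\bar{\zeta}_b$; and because $\bar{\zeta}_b$ is $\Upsilon_b$-measurable, $M_b = \bar{\zeta}_b$. Thus, once I produce a representation
\begin{eqnarray*}
M_x = M_0 + \int_0^x \varphi(y)\, dw(y), \qquad x \in [0,b],
\end{eqnarray*}
with $\varphi \in L^2_{\Upsilon}\big(\Omega, L^2([0,b], L_2^0)\big)$, evaluating at $x=b$ yields exactly $\bar{\zeta}_b = E\bar{\zeta}_b + \int_0^b \varphi(y)\, dw(y)$.

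The substance of the proof is therefore the martingale representation itself, which I would establish inside the Hilbert space $L^2(\Upsilon_b, \mathcal{H})$. Consider the subspace of It\^o integrals $\mathcal{I} = \big\{\int_0^b \varphi\, dw : \varphi \in L^2_{\Upsilon}(\Omega, L^2([0,b], L_2^0))\big\}$. By the It\^o isometry (the $p=2$ equality underlying Lemma~\ref{lm1}), the map $\varphi \mapsto \int_0^b \varphi\, dw$ is an isometry onto $\mathcal{I}$, so $\mathcal{I}$ is a \emph{closed} subspace consisting of mean-zero variables. It then suffices to show that $\mathcal{I}$ together with the deterministic constants exhausts $L^2(\Upsilon_b, \mathcal{H})$; equivalently, that any mean-zero $\eta \in L^2(\Upsilon_b, \mathcal{H})$ orthogonal to every element of $\mathcal{I}$ must vanish. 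I would test against the Dol\'eans--Dade exponential martingales of $w$ (equivalently, reduce via a Hilbert--Schmidt/orthonormal diagonalization of $Q$ and of $L_2^0$ to the scalar Brownian components, pairing $\eta$ with a fixed $h \in \mathcal{H}$), which are themselves It\^o integrals and whose terminal values are total in $L^2(\Upsilon_b)$ precisely because $\Upsilon_b$ is generated by $w$; orthogonality to all of them forces $\eta = 0$.

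The hard part is exactly this density/totality step: proving that the stochastic integrals fill out the whole mean-zero subspace. It is here that the hypothesis that $\{\Upsilon_x\}$ is the augmented filtration of $w$ is indispensable, since it is what guarantees that the exponential functionals of $w$ generate $\Upsilon_b$ and are hence dense in $L^2(\Upsilon_b)$. Everything else, namely the passage to $M_x$, the boundary identifications, and the closedness of $\mathcal{I}$ via the isometry, is routine once this completeness is in hand. As the statement is quoted from \cite{n11}, I would either invoke the infinite-dimensional representation theorem directly or carry out the exponential-density argument componentwise after diagonalizing $Q$.
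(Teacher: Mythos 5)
The paper does not prove this lemma at all: it is imported verbatim from the cited reference \cite{n11} (Mahmudov--Denker), so there is no in-paper argument to compare yours against. Your proof is the standard one and is essentially what one finds in the source: reduce to the martingale $M_x = E[\bar{\zeta}_b \mid \Upsilon_x]$, identify $M_0 = E\bar{\zeta}_b$ and $M_b = \bar{\zeta}_b$, observe that the It\^o isometry makes the range of the stochastic integral a closed subspace of mean-zero elements of $L^2(\Upsilon_b,\mathcal{H})$, and establish totality by testing against Dol\'eans--Dade exponentials (componentwise after diagonalizing $Q$). I see no gap in the argument itself. The one point worth stressing --- and you do flag it correctly --- is that the representation is \emph{false} for a general normal filtration: the paper's standing hypotheses only say that $\Upsilon_x$ is right-continuous and that $\Upsilon_0$ contains the $P$-null sets, and never explicitly state that $\{\Upsilon_x\}$ is the augmented filtration generated by $w$. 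That assumption is indispensable for the density step (and for $\Upsilon_0$ being trivial so that $E[\bar{\zeta}_b\mid\Upsilon_0]=E\bar{\zeta}_b$), and it is tacitly in force throughout this literature; your proof makes the dependence explicit, which is if anything an improvement over the paper's bare citation.
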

For any $\delta>0$ and $\bar{\zeta}_b\in L^2(\Upsilon_b, \mathcal{H})$ the control function is defined as
\begin{eqnarray*}
u_{\delta}(x)&=&\mathcal{B}^*P^*_{r}(b-x)\bigg[\big(\delta I+\mu_0^b\big)^{-1}\bigg(E\bar{\zeta}_b
	-C_{r}(b)[\phi(0)+h(\zeta)]-S_{r}(b)\big[\xi+f_1\big(0,\zeta(0),\zeta(\nu_1(0))\big)\big]\bigg)\\&&\quad+\int_{0}^{b}\big(\delta I+\mu_y^b\big)^{-1} \varphi(y)dw(y)+\int_0^b\big(\delta I+\mu_y^b\big)^{-1}C_{r}(b-y)f_1\big(y,\zeta(y),\zeta(\nu_1(y))\big)dy\\&&\quad-\int_0^b\big(\delta I+\mu_y^b\big)^{-1}P_{r}(b-y)f_2\big(y,\zeta(y),\zeta(\nu_2(y))\big)dy\\&&\quad-\int_{0}^{b}\big(\delta I+\mu_y^b\big)^{-1}P_{r}(b-y)\int_{0}^{y} \varrho(y-\tau)g(\tau)dw(\tau)dy\\&&\quad-\sum \limits_{ p=1}^{n}\big(\delta I+\mu_y^b\big)^{-1}C_{r}(b-x_p)\hat{I}_p(\zeta(x_p)) -\sum \limits_{ p=1}^{n}\big(\delta I+\mu_y^b\big)^{-1}S_{r}(b-x_p)\hat{J}_p(\zeta(x_p))\bigg].
\end{eqnarray*} 
\section{\textbf{Existence of Mild Solutions}}
\begin{theorem}\label{th1} (Convex Case)
	If the hypotheses (A1)-(A8), (A9)(i) are satisfied, then the system (\ref{1.1}) has at least one mild solution.
\end{theorem}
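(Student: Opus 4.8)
The plan is to fix $\delta>0$, insert the control $u_\delta$ into the representation \eqref{2.11}, and define a multivalued solution operator $\Phi:\mathcal{PC}_b\to 2^{\mathcal{PC}_b}$ whose image $\Phi\zeta$ is the set of all $\eta$ that equal $\phi(x)+h(\zeta)(x)$ on $[-a,0]$ and equal the right-hand side of \eqref{2.11} on $[0,b]$ as $g$ ranges over the selection set $S_{\mathcal{G},\zeta}$ (with $u=u_\delta$ assembled from the same $g$). By construction a fixed point $\zeta\in\Phi\zeta$ is exactly a mild solution of \eqref{1.1}, so it suffices to produce one. I would obtain it from Theorem \ref{lm4}, reducing the argument to checking that $\Phi$ takes values in $P_{bd,cl,cv}(\mathcal{PC}_b)$, is upper semicontinuous and condensing, and that $\mathcal{M}=\{\zeta:\lambda\zeta\in\Phi\zeta \text{ for some }\lambda>1\}$ is bounded.

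First I would verify the pointwise structure of $\Phi\zeta$. Convexity is inherited from the convex values of $\mathcal{G}$ in (A5): the selection set $S_{\mathcal{G},\zeta}$ is convex and each term of \eqref{2.11} (including $u_\delta$) depends linearly on $g$, so $\Phi\zeta$ is convex. Boundedness on bounded sets follows by combining the operator bounds in (A1), the growth bounds (A2)(ii), (A3)(iv), (A6), (A7), (A8)(ii), the bound (A4) on $\varrho$, the resolvent bound (A9)(i) used to control $u_\delta$, and Lemma \ref{lm1} applied to the two stochastic integrals; closedness of $\Phi\zeta$ then follows from these uniform estimates together with the closed-graph property established below.

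Next I would show $\Phi$ is completely continuous, hence condensing. Since $f_1$ enters \eqref{2.11} only inside the convolution $-\int_0^x C_r(x-y)f_1(\cdots)\,dy$ and through fixed boundary evaluations, while $h$ is completely continuous by (A8), every term is produced by the compact families $C_r,S_r,P_r$ (compactness of $C_r$ in (A1) inducing that of $S_r$ and $P_r$); there is no bare neutral term, so no contraction decomposition is needed. Relative compactness of $\{\eta(x):\eta\in\Phi\mathcal{D}\}$ for each fixed $x$ and bounded $\mathcal{D}\subset\mathcal{PC}_b$ then comes from compactness of these operators, and equicontinuity I would verify separately on each subinterval $(x_p,x_{p+1}]$ and across the jumps governed by $\hat I_p,\hat J_p$, using strong continuity of the families and Lemma \ref{lm1} for the stochastic convolution $\int_0^x P_r(x-y)\int_0^y\varrho(y-\tau)g(\tau)\,dw(\tau)\,dy$; an Arzel\`a--Ascoli argument on $\mathcal{PC}_b$ gives compactness. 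Upper semicontinuity I would get from the Lasota--Opial closed-graph criterion: for $\zeta_n\to\zeta$ and $\eta_n\in\Phi\zeta_n$ with $\eta_n\to\eta$, I pass to a limit in the selections $g_n\in S_{\mathcal{G},\zeta_n}$ using the u.s.c. and measurability of $\mathcal{G}$ in (A5) together with continuity of the linear map that sends a selection to the trajectory \eqref{2.11}, obtaining $\eta\in\Phi\zeta$. Finally, for any $\zeta$ with $\lambda\zeta\in\Phi\zeta$ the same estimates yield $E\|\zeta(x)\|^2\le c_1+c_2\int_0^x m(s)\,\Xi\!\big(\sup_{0\le\tau\le s}E\|\zeta(\tau)\|^2\big)\,ds$, with $\Xi$ built from the increasing functions $\pounds$ and $\beta$; a generalized Gronwall inequality bounds $\mathcal{M}$ uniformly in $\lambda>1$, and Theorem \ref{lm4} then delivers the fixed point.

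The main obstacle is the upper semicontinuity step, where one must pass to the limit in the selections $g_n$ of the multivalued map while simultaneously controlling the iterated stochastic convolution. This requires combining the u.s.c. of $\mathcal{G}$ with a Mazur-type selection argument and the mean-square estimate of Lemma \ref{lm1}, carried out compatibly with the impulsive decomposition of $\mathcal{PC}_b$; ensuring compactness of $\Phi$ across the jump points $x_p$, so that Arzel\`a--Ascoli applies on all of $[-a,b]$ rather than on a single subinterval, is the other delicate point.
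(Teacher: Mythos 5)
Your proposal is correct and follows essentially the same route as the paper: both define the multivalued solution operator built from \eqref{2.11} with the control $u_\delta$, verify convexity of its values, boundedness, equicontinuity via the compactness of $C_r$, $S_r$, $P_r$, and upper semicontinuity through the Lasota--Opial closed-graph lemma (the paper cites it as Lemma~2.7 of \cite{SU2022}), before invoking Theorem~\ref{lm4}. The only noticeable difference is in the final step: you bound the Martelli set $\mathcal{M}$ with a generalized Gronwall inequality built from $\pounds$ and $\beta$, whereas the paper bounds it by a direct estimate using the a priori radius $q$; your variant is, if anything, the cleaner way to close that step.
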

\begin{proof}
	Transform the system (\ref{1.1}) such that it becomes a fixed point problem. Let the multivalued map $\mathcal{K}:\mathcal{PC}^1_b\rightarrow P\big({\mathcal{PC}^1_b}\big)$ and $g\in S_{\mathcal{G},\zeta}$ determined by
	\begin{eqnarray*}
		\big(\mathcal{K}\zeta\big)(x)=	\left\{ \begin{array}{lll}
		\eta \in \mathcal{PC}^1_b:\eta(x)=\left\{ \begin{array}{lll}
		\phi(x)+h(\zeta)(x),\quad x\in[-a,0],
			&\\C_{r}(x)[\phi(0)+h(\zeta)]+S_{r}(x)\big[\xi+f_1\big(0,\zeta(0),\zeta(\nu_1(0))\big)\big] \\-\displaystyle\int_{0}^{x}C_{r}(x-y)f_1\big(y,\zeta(y),\zeta(\nu_1(y))\big)dy+\displaystyle\int_{0}^{x}P_{r}(x-y)\mathcal{B} u(y)dy\\ +\displaystyle\int_{0}^{x}P_{r}(x-y)f_2\big(y,\zeta(y),\zeta(\nu_2(y))\big)dy\\+\displaystyle\int_{0}^{x}P_{r}(x-y)\int_{0}^{y} \varrho(y-\tau)g(\tau)dw(\tau)dy \\+\sum \limits_{0 < x_p< x}C_{r}(x-x_p)\hat{I}_p(\zeta(x_p))\\+\sum \limits_{0 < x_p< x}S_{r}(x-x_p)\hat{J}_p(\zeta(x_p)), \quad x\in[0,b].
		\end{array}	\right.
		\end{array}	\right.
	\end{eqnarray*}
We verify that $\mathcal{K}$ is completely continuous with bounded, closed and convex values, and that it is u.s.c. For convenience, we divide the proof into following steps.\\
\textbf{Step 1.} $\mathcal{K}\zeta$ is convex for every $\zeta \in  \mathcal{PC}^1_b.$ In fact, if $\eta_1, \eta_2 \in \mathcal{K}\zeta,$ then there exist $g_1,g_2 \in S_{\mathcal{G},\zeta} $ such that $x\in [0,b],$ we obtain
\begin{eqnarray*}
	\eta_i(x)&=&C_{r}(x)[\phi(0)+h(\zeta)]+S_{r}(x)\big[\xi+f_1\big(0,\zeta(0),\zeta(\nu_1(0))\big)\big] \\&&-\displaystyle\int_{0}^{x}C_{r}(x-y)f_1\big(y,\zeta(y),\zeta(\nu_1(y))\big)dy+\displaystyle\int_{0}^{x}P_{r}(x-y)\mathcal{B} \mathcal{B}^*P^*_{r}(b-y)\\&&\quad\times\bigg[\big(\delta I+\mu_0^b\big)^{-1}\bigg(E\bar{\zeta}_b
	-C_{r}(b)[\phi(0)+h(\zeta)]-S_{r}(b)\big[\xi+f_1\big(0,\zeta(0),\zeta(\nu_1(0))\big)\big]\bigg)\\&&\quad+\big(\delta I+\mu_y^b\big)^{-1}\bigg\{\int_{0}^{b} \varphi(y)dw(y)+\int_0^bC_{r}(b-y)f_1\big(y,\zeta(y),\zeta(\nu_1(y))\big)dy\\&&\quad-\int_0^bP_{r}(b-y)f_2\big(y,\zeta(y),\zeta(\nu_2(y))\big)dy-\int_{0}^{b}P_{r}(b-y)\int_{0}^{y} \varrho(y-\tau)g_i(\tau)dw(\tau)dy\\&&\quad-\sum \limits_{ p=1}^{n}C_{r}(b-x_p)\hat{I}_p(\zeta(x_p)) -\sum \limits_{ p=1}^{n}S_{r}(b-x_p)\hat{J}_p(\zeta(x_p))\bigg\}\bigg]ds\\&& +\displaystyle\int_{0}^{x}P_{r}(x-y)f_2\big(y,\zeta(y),\zeta(\nu_2(y))\big)dy+\displaystyle\int_{0}^{x}P_{r}(x-y)\int_{0}^{y} \varrho(y-\tau)g_i(\tau)dw(\tau)dy \\&&+\sum \limits_{0 < x_p< x}C_{r}(x-x_p)\hat{I}_p(\zeta(x_p))+\sum \limits_{0 < x_p< x}S_{r}(x-x_p)\hat{J}_p(\zeta(x_p)), \quad i=1,2.
\end{eqnarray*}
Let $0\leq \lambda \leq 1.$ Then for each $x\in [0,b],$ we have
\begin{eqnarray*}
\big(\lambda \eta_1+(1-\lambda)\eta_2\big)(x)&=&C_{r}(x)[\phi(0)+h(\zeta)]+S_{r}(x)\big[\xi+f_1\big(0,\zeta(0),\zeta(\nu_1(0))\big)\big] \\&&-\displaystyle\int_{0}^{x}C_{r}(x-y)f_1\big(y,\zeta(y),\zeta(\nu_1(y))\big)dy\\&&+\displaystyle\int_{0}^{x}P_{r}(x-y)\mathcal{B} \mathcal{B}^*P^*_{r}(b-y)\bigg[\big(\delta I+\mu_0^b\big)^{-1}\bigg(E\bar{\zeta}_b
	-C_{r}(b)[\phi(0)+h(\zeta)]\\&&\quad-S_{r}(b)\big[\xi+f_1\big(0,\zeta(0),\zeta(\nu_1(0))\big)\big]\bigg)+\big(\delta I+\mu_y^b\big)^{-1}\bigg\{\int_{0}^{b} \varphi(y)dw(y)\\&&\quad+\int_0^bC_{r}(b-y)f_1\big(y,\zeta(y),\zeta(\nu_1(y))\big)dy\\&&\quad-\int_0^bP_{r}(b-y)f_2\big(y,\zeta(y),\zeta(\nu_2(y))\big)dy\\&&\quad-\int_{0}^{b}P_{r}(b-y)\int_{0}^{y} \varrho(y-\tau)[\lambda g_1(\tau)+(1-\lambda)g_2(\tau)]dw(\tau)dy\\&&\quad-\sum \limits_{ p=1}^{n}C_{r}(b-x_p)\hat{I}_p(\zeta(x_p)) -\sum \limits_{ p=1}^{n}S_{r}(b-x_p)\hat{J}_p(\zeta(x_p))\bigg\}\bigg]ds\\&& +\displaystyle\int_{0}^{x}P_{r}(x-y)f_2\big(y,\zeta(y),\zeta(\nu_2(y))\big)dy\\&&+\displaystyle\int_{0}^{x}P_{r}(x-y)\int_{0}^{y} \varrho(y-\tau)[\lambda g_1(\tau)+(1-\lambda)g_2(\tau)]dw(\tau)dy \\&&+\sum \limits_{0<x_p<x}C_{r}(x-x_p)\hat{I}_p(\zeta(x_p))+\sum \limits_{0<x_p<x}S_{r}(x-x_p)\hat{J}_p(\zeta(x_p)).
\end{eqnarray*}
Clearly, $S_{\mathcal{G},\zeta}$ is convex, since $\mathcal{G}$ is convex values. Therefore, $\lambda g_1+(1-\lambda)g_2\in S_{\mathcal{G},\zeta}.$ Consequently, 
$\lambda \eta_1+(1-\lambda)\eta_2\in \mathcal{K}\zeta$.\\
\noindent \textbf{Step 2.} $\mathcal{K}$ maps bounded sets into bounded sets in $\mathcal{PC}^1_b.$

\noindent In fact, it is enough to prove that there exists a constant $\epsilon>0$ such that for any $\eta \in \mathcal{K}\zeta,~ \zeta \in \mathcal{Z}_q,$ where
\begin{eqnarray*}
	\mathcal{Z}_q=\big\{\zeta \in\mathcal{PC}^1_b: \|\zeta\|^2_{\mathcal{PC}^1_b}\leq q, ~\mbox{where}~ q=\max\{q_1, q_2 \}~ \mbox {such that}~\|\zeta\|^2_{\mathcal{PC}_b}\leq q_1, \|\zeta'\|^2_{\mathcal{PC}^1_b}\leq q_2\big\}, 
\end{eqnarray*}
one possesses $\|\eta\|^2_{\mathcal{PC}^1_b} \leq \epsilon.$
If $\eta \in \mathcal{K}\zeta,$ then there exists $g\in  S_{\mathcal{G},\zeta}$ such that for any $x\in [0,b],$ we have
\begin{eqnarray*}
	\eta(x)&=&C_{r}(x)[\phi(0)+h(\zeta)]+S_{r}(x)\big[\xi+f_1\big(0,\zeta(0),\zeta(\nu_1(0))\big)\big] \\&&-\displaystyle\int_{0}^{x}C_{r}(x-y)f_1\big(y,\zeta(y),\zeta(\nu_1(y))\big)dy+\displaystyle\int_{0}^{x}P_{r}(x-y)\mathcal{B} u_\delta (y)dy\\&& +\displaystyle\int_{0}^{x}P_{r}(x-y)f_2\big(y,\zeta(y),\zeta(\nu_2(y))\big)dy+\displaystyle\int_{0}^{x}P_{r}(x-y)\int_{0}^{y} \varrho(y-\tau)g(\tau)dw(\tau)dy \\&&+\sum \limits_{0<x_p<x}C_{r}(x-x_p)\hat{I}_p(\zeta(x_p))+\sum \limits_{0<x_p<x}S_{r}(x-x_p)\hat{J}_p(\zeta(x_p)).
\end{eqnarray*}
Hence, from the hypotheses and Lemma~\ref{lm1}, it follows that
\begin{eqnarray*}
	E\left\|\eta(x)\right\|^2&\leq&8E\left\|C_{r}(x)[\phi(0)+h(\zeta)]\right\|^2+8E\left\|S_{r}(x)\big[\xi+f_1\big(0,\zeta(0),\zeta(\nu_1(0))\big)\big]\right\|^2 \\&&+8E\left\|\displaystyle\int_{0}^{x}C_{r}(x-y)f_1\big(y,\zeta(y),\zeta(\nu_1(y))\big)dy\right\|^2+8E\left\|\displaystyle\int_{0}^{x}P_{r}(x-y)\mathcal{B} u_\delta (y)dy\right\|^2\\&& +8E\left\|\displaystyle\int_{0}^{x}P_{r}(x-y)f_2\big(y,\zeta(y),\zeta(\nu_2(y))\big)dy\right\|^2\\&&+8E\left\|\displaystyle\int_{0}^{x}P_{r}(x-y)\int_{0}^{y} \varrho(y-\tau)g(\tau)dw(\tau)dy\right\|^2 \\&&+8E\Bigg\|\sum \limits_{0<x_p<x}C_{r}(x-x_p)\hat{I}_p(\zeta(x_p))\Bigg\|^2+8E\Bigg\|\sum \limits_{0<x_p<x}S_{r}(x-x_p)\hat{J}_p(\zeta(x_p))\Bigg\|^2\\
	&\leq&16\left\|C_{r}(x)\right\|^2E\big[\|\phi(0)\|^2+\|h(\zeta)\|^2\big] \\&&+16\left\|S_{r}(x)\right\|^2E\big[\|\xi\|^2+\big\|f_1\big(0,\zeta(0),\zeta(\nu_1(0))\big)\big\|^2\big]\\&& 
	+8\displaystyle\int_{0}^{x}\|C_{r}(x-y)\|^2  E\big\|f_1\big(y,\zeta(y),\zeta(\nu_1(y))\big)\big\|^2dy\\&&+64\displaystyle\int_{0}^{x}\|P_{r}(x-y)\|^2 \|\mathcal{B}\|^2\|\mathcal{B}^*\|^2\|P^*_{r}(b-y)\|^2\bigg[\left\|\big(\delta I+\mu_0^b\big)^{-1}\right\|^2\bigg(2E\|\bar{\zeta}_b\|^2\\&&\quad+2\|C_{r}(b)\|^2E\big[\|\phi(0)\|^2+\|h(\zeta)\|^2\big]\\&&\quad+2\|S_{r}(b)\|^2E\big[\|\xi\|^2+\big\|f_1\big(0,\zeta(0),\zeta(\nu_1(0))\big)\big\|^2\big]\bigg)\\&&\quad+\left\|\big(\delta I+\mu_y^b\big)^{-1}\right\|^2\bigg\{2C_2\int_{0}^{b} E\left\|\varphi(y)\right\|^2_{L_2^0}dy\\&&\quad+\int_0^b\|C_{r}(b-y)\|^2E\left\|f_1\big(y,\zeta(y),\zeta(\nu_1(y))\big)\right\|^2dy\\&&\quad+\int_0^b\|P_{r}(b-y)\|^2E\left\|f_2\big(y,\zeta(y),\zeta(\nu_2(y))\big)\right\|^2dy\\&&\quad+C_2\int_{0}^{b}\|P_{r}(b-y)\|^2\int_{0}^{y} \|\varrho(y-\tau)\|^2E\left\|g(\tau)\right\|^2d\tau dy\\&&\quad+\sum \limits_{p=1}^n\|C_{r}(b-x_p)\|^2E\|\hat{I}_p(\zeta(x_p))\|^2+\sum \limits_{p=1}^n\|S_{r}(b-x_p)\|^2E\|\hat{J}_p(\zeta(x_p))\|^2\bigg\}\bigg]\\&& +8\displaystyle\int_{0}^{x}\|P_{r}(x-y)\|^2E\left\|f_2\big(y,\zeta(y),\zeta(\nu_2(y))\big)\right\|^2dy\\&&+8C_2\displaystyle\int_{0}^{x}\|P_{r}(x-y)\|^2\int_{0}^{y} \|\varrho(y-\tau)\|^2E\left\|g(\tau)\right\|^2d\tau dy \\&&+8\sum \limits_{0<x_p<x}\left\|C_{r}(x-x_p)\right\|^2E\left\|\hat{I}_p(\zeta(x_p))\right\|^2+8\sum \limits_{0<x_p<x}\left\|S_{r}(x-x_p)\right\|^2E\left\|\hat{J}_p(\zeta(x_p))\right\|^2\\
	&\leq& 16 M_1\big[E\|\phi(0)\|^2+L_h(1+q)\big]+16M_2\big[E\|\xi\|^2+k_2\big]+8M_1bk_1\big(1+2q\big)\\&&+\frac{64}{\delta}M_3^2M_4^2b\bigg[2E\|\bar{\zeta}_b\|^2+2M_1\big[E\|\phi(0)\|^2+L_h(1+q)\big]+2M_2\big[E\|\xi\|^2+k_2\big]\\&&\quad+2C_2\int_{0}^{b} E\left\|\varphi(y)\right\|^2_{L_2^0}dy+M_1bk_1\big(1+2q\big)+M_3\int_{0}^{b}\Theta(y)\pounds(2q)dy\\&&\quad+C_2M_3\ell\beta(2q)\int_0^b\wp(y)dy+\sum \limits_{p=1}^nM_1L_{\hat{I}_p}+\sum \limits_{p=1}^nM_2L_{\hat{J}_p}\bigg]\\&&+8M_3\pounds(2q)\int_0^x\Theta(y)dy+8C_2M_3\ell\beta(2q)\displaystyle\int_{0}^{x}\wp(y) dy+8\sum \limits_{0<x_p<x}M_1L_{\hat{I}_p}\\&&+8\sum \limits_{0<x_p<x}M_2L_{\hat{J}_p}\\
	&\leq& \epsilon.
\end{eqnarray*}
Then, for each $\eta \in \mathcal{K} \zeta,$ we get $\|\eta\|^2_{\mathcal{PC}^1_b} \leq \epsilon.$\\
\textbf{Step 3.} $\mathcal{K}$ maps bounded sets into equicontinuous sets of $\mathcal{PC}^1_b.$\\
\noindent For any $\eta \in \mathcal{K}\zeta$ and $\zeta \in \mathcal{Z}_q,$ there exists $g\in S_{\mathcal{G},\zeta} $ such that
\begin{eqnarray*}
	\eta(x)&=&C_{r}(x)[\phi(0)+h(\zeta)]+S_{r}(x)\big[\xi+f_1\big(0,\zeta(0),\zeta(\nu_1(0))\big)\big] \\&&-\displaystyle\int_{0}^{x}C_{r}(x-y)f_1\big(y,\zeta(y),\zeta(\nu_1(y))\big)dy+\displaystyle\int_{0}^{x}P_{r}(x-y)\mathcal{B} u_\delta (y)dy\\&& +\displaystyle\int_{0}^{x}P_{r}(x-y)f_2\big(y,\zeta(y),\zeta(\nu_2(y))\big)dy\\&&+\displaystyle\int_{0}^{x}P_{r}(x-y)\int_{0}^{y} \varrho(y-\tau)g(\tau)dw(\tau)dy \\&&+\sum \limits_{0<x_p<x}C_{r}(x-x_p)\hat{I}_p(\zeta(x_p))+\sum \limits_{0<x_p<x}S_{r}(x-x_p)\hat{J}_p(\zeta(x_p)).
\end{eqnarray*}
Let $0<x_1<x_2\leq b,$ then we get\\
$E\|\eta(x_2)-\eta(x_1)\|^2$
\begin{eqnarray*}
	\hspace{-2cm}&\leq&14E\left\|[C_{r}(x_2)-C_{r}(x_1)][\phi(0)+h(\zeta)]\right\|^2\\&&+14E\big\|\big[S_{r}(x_2)-S_{r}(x_1)\big]\big[\xi+f_1\big(0,\zeta(0),\zeta(\nu_1(0))\big)\big]\big\|^2 \\&&+14E\left\|\displaystyle\int_{0}^{x_1}\big[C_{r}(x_2-y)-C_{r}(x_1-y)\big]f_1\big(y,\zeta(y),\zeta(\nu_1(y))\big)dy\right\|^2\\&&+14E\left\|\displaystyle\int_{x_1}^{x_2}C_{r}(x_2-y)f_1\big(y,\zeta(y),\zeta(\nu_1(y))\big)dy\right\|^2\\&&
	+14E\left\|\displaystyle\int_{0}^{x_1}\big[P_{r}(x_2-y)-P_{r}(x_1-y)\big]\mathcal{B} u_\delta (y)dy\right\|^2\\&&
	+14E\left\|\displaystyle\int_{x_1}^{x_2}P_{r}(x_2-y)\mathcal{B} u_\delta (y)dy\right\|^2\\&& +14E\left\|\displaystyle\int_{0}^{x_1}\big[P_{r}(x_2-y)-P_{r}(x_1-y)\big]f_2\big(y,\zeta(y),\zeta(\nu_2(y))\big)dy\right\|^2\\&&+14E\left\|\displaystyle\int_{x_1}^{x_2}P_{r}(x_2-y)f_2\big(y,\zeta(y),\zeta(\nu_2(y))\big)dy\right\|^2\\&&+14E\left\|\displaystyle\int_{0}^{x_1}\big[P_{r}(x_2-y)-P_{r}(x_1-y)\big]\int_{0}^{y} \varrho(y-\tau)g(\tau)dw(\tau)dy\right\|^2 \\&&+14E\left\|\displaystyle\int_{x_1}^{x_2}P_{r}(x_2-y)\int_{0}^{y} \varrho(y-\tau)g(\tau)dw(\tau)dy\right\|^2 \\&&+14E\left\|\sum \limits_{0 < x_p < x_1}\big[C_{r}(x_2-x_p)-C_{r}(x_1-x_p)\big]\hat{I}_p(\zeta(x_p))\right\|^2\\&&+14E\left\|\sum \limits_{x_1 \leq x_p < x_2}C_{r}(x_2-x_p)\hat{I}_p(\zeta(x_p))\right\|^2\\&&+14E\left\|\sum \limits_{0 < x_p < x_1}\big[S_{r}(x_2-x_p)-S_{r}(x_1-x_p)\big]\hat{J}_p(\zeta(x_p))\right\|^2\\&&+14E\left\|\sum \limits_{x_1 \leq x_p < x_2}S_{r}(x_2-x_p)\hat{J}_p(\zeta(x_p))\right\|^2\\
	&\leq &28\left\|C_{r}(x_2)-C_{r}(x_1)\right\|^2\big[E\|\phi(0)\|^2+L_h(1+q)\big]\\&&+28\big\|S_{r}(x_2)-S_{r}(x_1)\big\|^2\big[E\|\xi\|^2+k_2\big]\\&&+14\displaystyle\int_{0}^{x_1}\left\|C_{r}(x_2-y)-C_{r}(x_1-y)\right\|^2k_1\big(1+2q\big)dy\\&&+14\displaystyle\int_{x_1}^{x_2}M_1k_1\big(1+2q\big)dy+\frac{112}{\delta}M_3M_4^2\displaystyle\int_{0}^{x_1}\|P_{r}(x_2-y)-P_{r}(x_1-y)\|^2 \\&&\quad\times\bigg[2E\|\bar{\zeta}_b\|^2+2C_2\int_{0}^{b} E\left\|\varphi(y)\right\|^2_{L_2^0}dy+2M_1\big[E\|\phi(0)\|^2+L_h(1+q)\big]\\&&\quad\quad+2M_2\big[E\|\xi\|^2+k_2\big]+M_1bk_1\big(1+2q\big)+M_3\int_{0}^b\Theta(y)\pounds(2q)dy\\&&\quad\quad+C_2M_3\ell\beta(2q)\int_0^b\wp(y)dy+\sum \limits_{p=1}^nM_1L_{\hat{I}_p}+\sum \limits_{p=1}^nM_2L_{\hat{J}_p}\bigg]dy
	\\&&+\frac{112}{\delta}M_3^2M_4^2\displaystyle\int_{x_1}^{x_2}\bigg[2E\|\bar{\zeta}_b\|^2+2C_2\int_{0}^{b} E\left\|\varphi(y)\right\|^2_{L_2^0}dy\\&&\quad+2M_1\big[E\|\phi(0)\|^2+L_h(1+q)\big]+2M_2\big[E\|\xi\|^2+k_2\big]+M_1bk_1\big(1+2q\big)\\&&\quad+M_3\int_0^b\Theta(y)\pounds(2q)dy+C_2M_3\ell\beta(2q)\int_0^b\wp(y)dy+\sum \limits_{p=1}^nM_1L_{\hat{I}_p}+\sum \limits_{p=1}^nM_2L_{\hat{J}_p}\bigg]dy\\&&
 +14\displaystyle\int_{0}^{x_1}\left\|P_{r}(x_2-y)-P_{r}(x_1-y)\right\|^2\pounds(2q)\Theta(y)dy\\&&+14\displaystyle\int_{x_1}^{x_2}M_3\pounds(2q)\Theta(y)dy\\&&+14\displaystyle\int_{0}^{x_1}\left\|P_{r}(x_2-y)-P_{r}(x_1-y)\right\|^2C_2\ell\beta(2q)\wp(y)dy \\&&+14\displaystyle\int_{x_1}^{x_2}C_2M_3\ell\beta(2q)\wp(y)dy \\&&+14\sum \limits_{0< x_p< x_1}\left\|C_{r}(x_2-x_p)-C_{r}(x_1-x_p)\right\|^2L_{\hat{I}_p}\\&&+14\sum \limits_{x_1\leq x_p <x_2}\left\|C_{r}(x_2-x_p)\right\|^2L_{\hat{I}_p}\\&&+14\sum \limits_{0<x_p <x_1}\left\|S_{r}(x_2-x_p)-S_{r}(x_1-x_p)\right\|^2L_{\hat{J}_p}\\&&+14\sum \limits_{x_1\leq x_p <x_2}\left\|S_{r}(x_2-x_p)\right\|^2L_{\hat{J}_p}.
\end{eqnarray*} 
The right-hand side of the above inequality is independent of $\zeta \in \mathcal{Z}_q$ and $	E\|\eta(x_2)-\eta(x_1)\|^2 \to 0$ as $x_2-x_1 \to 0,$ for all $\zeta \in \mathcal{Z}_q.$ Thus, the compactness of $C_{r}(x)$ and $S_{r}(x)$ for $x>0$ gives the continuity in the uniform operator topology. Hence, the set $\{\mathcal{K}( \mathcal{Z}_q)\}$ is equicontinuous on $[0,b].$\\

\noindent
\textbf{Step 4.} $\mathcal{K}$ has a closed graph.\\
\noindent Consider $\zeta_n \to \zeta_*, ~\eta_n \in \mathcal{K}\zeta_n$ and $\eta_n \to \eta_*.$ We prove that $\eta_* \in \mathcal{K}\zeta_*, ~\eta_n \in \mathcal{K}\zeta_n$ means that there exists $g_n \in S_{\mathcal{G},\zeta_n} $ such that
\begin{eqnarray}\label{eq4}
	\eta_n(x)&=&C_{r}(x)[\phi(0)+h(\zeta_n)]+S_{r}(x)\big[\xi+f_1\big(0,\zeta_n(0),\zeta_n(\nu_1(0))\big)\big]\nonumber \\&&-\displaystyle\int_{0}^{x}C_{r}(x-y)f_1\big(y,\zeta_n(y),\zeta_n(\nu_1(y))\big)dy\nonumber\\&&+\displaystyle\int_{0}^{x}P_{r}(x-y)\mathcal{B} \mathcal{B}^*P^*_{r}(b-y)\bigg[\big(\delta I+\mu_0^b\big)^{-1}\bigg(E\bar{\zeta}_b
	-C_{r}(b)[\phi(0)+h(\zeta_n)]\nonumber\\&&\quad-S_{r}(b)\big[\xi+f_1\big(0,\zeta_n(0),\zeta_n(\nu_1(0))\big)\big]\bigg)+\big(\delta I+\mu_y^b\big)^{-1}\bigg\{\int_{0}^{b} \varphi(y)dw(y)\nonumber\\&&\quad+\int_0^bC_{r}(b-y)f_1\big(y,\zeta_n(y),\zeta_n(\nu_1(y))\big)dy\notag\\&&\quad-\int_0^bP_{r}(b-y)f_2\big(y,\zeta_n(y),\zeta_n(\nu_2(y))\big)dy\nonumber\\&&\quad-\int_{0}^{b}P_{r}(b-y)\int_{0}^{y} \varrho(y-\tau)g_n(\tau)dw(\tau)dy\nonumber\\&&\quad-\sum \limits_{p=1}^nC_{r}(b-x_p)\hat{I}_p(\zeta_n(x_p)) -\sum \limits_{p=1}^nS_{r}(b-x_p)\hat{J}_p(\zeta_n(x_p))\bigg\}\bigg]dy \nonumber\\&&+\displaystyle\int_{0}^{x}P_{r}(x-y)f_2\big(y,\zeta_n(y),\zeta_n(\nu_2(y))\big)dy\nonumber\\&&+\displaystyle\int_{0}^{x}P_{r}(x-y)\int_{0}^{y} \varrho(y-\tau)g_n(\tau)dw(\tau)dy+\sum \limits_{0<x_p<x}C_{r}(x-x_p)\hat{I}_p(\zeta_n(x_p))\nonumber \\&&+\sum \limits_{0<x_p<x}S_{r}(x-x_p)\hat{J}_p(\zeta_n(x_p)),\quad x\in[0,b].
\end{eqnarray}
We need to show that there exists $g_*\in S_{\mathcal{G},\zeta_*},$ for every $x\in [0,b]$ such that
\begin{eqnarray*}
	\eta_*(x)&=&C_{r}(x)[\phi(0)+h(\zeta_*)]+S_{r}(x)\big[\xi+f_1\big(0,\zeta_*(0),\zeta_*(\nu_1(0))\big)\big] \\&&-\displaystyle\int_{0}^{x}C_{r}(x-y)f_1\big(y,\zeta_*(y),\zeta_*(\nu_1(y))\big)dy\\&&+\displaystyle\int_{0}^{x}P_{r}(x-y)\mathcal{B} \mathcal{B}^*P^*_{r}(b-y)\bigg[\big(\delta I+\mu_0^b\big)^{-1}\bigg(E\bar{\zeta}_b
	-C_{r}(b)[\phi(0)+h(\zeta_*)]\\&&\quad-S_{r}(b)\big[\xi+f_1\big(0,\zeta_*(0),\zeta_*(\nu_1(0))\big)\big]\bigg)+\big(\delta I+\mu_y^b\big)^{-1}\bigg\{\int_{0}^{b} \varphi(y)dw(y)\\&&\quad+\int_0^bC_{r}(b-y)f_1\big(y,\zeta_*(y),\zeta_*(\nu_1(y))\big)dy\\&&\quad-\int_0^bP_{r}(b-y)f_2\big(y,\zeta_*(y),\zeta_*(\nu_2(y))\big)dy\\&&\quad-\int_{0}^{b}P_{r}(b-y)\int_{0}^{y} \varrho(y-\tau)g_*(\tau)dw(\tau)dy\\&&\quad -\sum \limits_{p=1}^nC_{r}(b-x_p)\hat{I}_p(\zeta_*(x_p))-\sum \limits_{p=1}^nS_{r}(b-x_p)\hat{J}_p(\zeta_*(x_p))\bigg\}\bigg]dy \\&&+\displaystyle\int_{0}^{x}P_{r}(x-y)f_2\big(y,\zeta_*(y),\zeta_*(\nu_2(y))\big)dy\\&&+\displaystyle\int_{0}^{x}P_{r}(x-y)\int_{0}^{y} \varrho(y-\tau)g_*(\tau)dw(\tau)dy +\sum \limits_{0<x_p<x}C_{r}(x-x_p)\hat{I}_p(\zeta_*(x_p))\\&&+\sum \limits_{0<x_p<x}S_{r}(x-x_p)\hat{J}_p(\zeta_*(x_p)),\quad x\in[0,b].
\end{eqnarray*}
Clearly, since $\hat{I}_p, \hat{J}_p,~p=1,2,\ldots,n$ are continuous, we have
\begin{eqnarray*}
&&	E\bigg\|\bigg[\eta_n(x)-C_{r}(x)[\phi(0)+h(\zeta_n)]-S_{r}(x)\big[\xi+f_1\big(0,\zeta_n(0),\zeta_n(\nu_1(0))\big)\big] \\&&+\displaystyle\int_{0}^{x}C_{r}(x-y)f_1\big(y,\zeta_n(y),\zeta_n(\nu_1(y))\big)dy-\displaystyle\int_{0}^{x}P_{r}(x-y)f_2\big(y,\zeta_n(y),\zeta_n(\nu_2(y))\big)dy\\&&-\sum \limits_{0<x_p<x}C_{r}(x-x_p)\hat{I}_p(\zeta_n(x_p))-\sum \limits_{0<x_p<x}S_{r}(x-x_p)\hat{J}_p(\zeta_n(x_p))\\&&-\displaystyle\int_{0}^{x}P_{r}(x-y)\mathcal{B} \mathcal{B}^*P^*_{r}(b-y)\bigg\{\big(\delta I+\mu_0^b\big)^{-1}\bigg(E\bar{\zeta}_b
	-C_{r}(b)[\phi(0)+h(\zeta_n)]\\&&-S_{r}(b)\big[\xi+f_1\big(0,\zeta_n(0),\zeta_n(\nu_1(0))\big)\big]\bigg)+\big(\delta I+\mu_y^b\big)^{-1}\bigg(\int_{0}^{b} \varphi(y)dw(y)\\&&+\int_0^bC_{r}(b-y)f_1\big(y,\zeta_n(y),\zeta_n(\nu_1(y))\big)dy-\int_0^bP_{r}(b-y)f_2\big(y,\zeta_n(y),\zeta_n(\nu_2(y))\big)dy\\&&-\sum \limits_{p=1}^nC_{r}(b-x_p)\hat{I}_p(\zeta_n(x_p)) -\sum \limits_{p=1}^nS_{r}(b-x_p)\hat{J}_p(\zeta_n(x_p))\bigg)\bigg\}dy\bigg]\\&&-\bigg[\eta_*(x)-C_{r}(x)[\phi(0)+h(\zeta_*)]-S_{r}(x)\big[\xi+f_1\big(0,\zeta_*(0),\zeta_*(\nu_1(0))\big)\big] \\&&+\displaystyle\int_{0}^{x}C_{r}(x-y)f_1\big(y,\zeta_*(y),\zeta_*(\nu_1(y))\big)dy\\&&-\displaystyle\int_{0}^{x}P_{r}(x-y)f_2\big(y,\zeta_*(y),\zeta_*(\nu_2(y))\big)dy-\sum \limits_{0<x_p<x}C_{r}(x-x_p)\hat{I}_p(\zeta_*(t_p))\\&&-\sum \limits_{0<x_p<x}S_{r}(x-x_p)\hat{J}_p(\zeta_*(x_p))-\displaystyle\int_{0}^{x}P_{r}(x-y)\mathcal{B} \mathcal{B}^*P^*_{r}(b-y)\bigg\{\big(\delta I+\mu_0^b\big)^{-1}\bigg(E\bar{\zeta}_b\\&&
	-C_{r}(b)[\phi(0)+h(\zeta_*)]-S_{r}(b)\big[\xi+f_1\big(0,\zeta_*(0),\zeta_*(\nu_1(0))\big)\big]\bigg)\\&&+\big(\delta I+\mu_y^b\big)^{-1}\bigg(\int_{0}^{b} \varphi(y)dw(y)+\int_0^bC_{r}(b-y)f_1\big(y,\zeta_*(y),\zeta_*(\nu_1(y))\big)dy\\&&-\int_0^bP_{r}(b-y)f_2\big(y,\zeta_*(y),\zeta_*(\nu_2(y))\big)dy-\sum \limits_{p=1}^nC_{r}(b-x_p)\hat{I}_p(\zeta_*(x_p))\\&& -\sum \limits_{p=1}^nS_{r}(b-x_p)\hat{J}_p(\zeta_*(x_p))\bigg)\bigg\}dy\bigg]\bigg\|^2 \to 0 ~\mbox{as}~n \to \infty.
\end{eqnarray*}
Let the linear operator $\Sigma:L^2([0,b],\mathcal{H}) \to C([0,b], \mathcal{H})$
\begin{eqnarray*}
g \to \big(\Sigma g\big)(x)&=&	\displaystyle\int_{0}^{x}P_{r}(x-y)\int_{0}^{y} \varrho(y-\tau)g(\tau)dw(\tau)dy\\&&-\displaystyle\int_{0}^{x}P_{r}(x-y)\mathcal{B} \mathcal{B}^*P^*_{r}(b-y)\big(\delta I+\mu_y^b\big)^{-1}\bigg(\int_{0}^{b}P_{r}(b-y)\\&&\quad\times\int_{0}^{y} \varrho(y-\tau)g(\tau)dw(\tau)dy\bigg)dy.
\end{eqnarray*}
From \cite[Lemma~2.7]{SU2022}, $\Sigma \circ S_{\mathcal{G}}$ is a closed graph operator, we have
\begin{eqnarray*}
	&&	\eta_n(x)-C_{r}(x)[\phi(0)+h(\zeta_n)]-S_{r}(x)\big[\xi+f_1\big(0,\zeta_n(0),\zeta_n(\nu_1(0))\big)\big] \\&&+\displaystyle\int_{0}^{x}C_{r}(x-y)f_1\big(y,\zeta_n(y),\zeta_n(\nu_1(y))\big)dy-\displaystyle\int_{0}^{x}P_{r}(x-y)f_2\big(y,\zeta_n(y),\zeta_n(\nu_2(y))\big)dy\\&&-\sum \limits_{0<x_p<x}C_{r}(x-x_p)\hat{I}_p(\zeta_n(x_p))-\sum \limits_{0<x_p<x}S_{r}(x-x_p)\hat{J}_p(\zeta_n(x_p))\\&&-\displaystyle\int_{0}^{x}P_{r}(x-y)\mathcal{B} \mathcal{B}^*P^*_{r}(b-y)\bigg\{\big(\delta I+\mu_0^b\big)^{-1}\bigg(E\bar{\zeta}_b
	-C_{r}(b)[\phi(0)+h(\zeta_n)]\\&&-S_{r}(b)\big[\xi+f_1\big(0,\zeta_n(0),\zeta_n(\nu_1(0))\big)\big]\bigg)+\big(\delta I+\mu_y^b\big)^{-1}\bigg(\int_{0}^{b} \varphi(y)dw(y)\\&&+\int_0^bC_{r}(b-y)f_1\big(y,\zeta_n(y),\zeta_n(\nu_1(y))\big)dy-\int_0^bP_{r}(b-y)f_2\big(y,\zeta_n(y),\zeta_n(\nu_2(y))\big)dy\\&&-\sum \limits_{p=1}^nC_{r}(b-x_p)\hat{I}_p(\zeta_n(x_p)) -\sum \limits_{p=1}^nS_{r}(b-x_p)\hat{J}_p(\zeta_n(x_p))\bigg)\bigg\}dy \in \Sigma \big( S_{\mathcal{G}, \zeta_n}\big).
\end{eqnarray*}
Since $\zeta_n \to \zeta_*,$ it follows from \cite[Lemma~2.7]{SU2022}, 
\begin{eqnarray*}
&&	\eta_*(x)-C_{r}(x)[\phi(0)+h(\zeta_*)]-S_{r}(x)\big[\xi+f_1\big(0,\zeta_*(0),\zeta_*(\nu_1(0))\big)\big] \\&&+\displaystyle\int_{0}^{x}C_{r}(x-y)f_1\big(y,\zeta_*(y),\zeta_*(\nu_1(y))\big)dy-\displaystyle\int_{0}^{x}P_{r}(x-y)f_2\big(y,\zeta_*(y),\zeta_*(\nu_2(y))\big)dy\\&&-\sum \limits_{0<x_p<x}C_{r}(x-x_p)\hat{I}_p(\zeta_*(x_p))-\sum \limits_{0<x_p<x}S_{r}(x-x_p)\hat{J}_p(\zeta_*(x_p))\\&&-\displaystyle\int_{0}^{x}P_{r}(x-y)\mathcal{B} \mathcal{B}^*P^*_{r}(b-y)\bigg\{\big(\delta I+\mu_0^b\big)^{-1}\bigg(E\bar{\zeta}_b
	-C_{r}(b)[\phi(0)+h(\zeta_*)]\\&&-S_{r}(b)\big[\xi+f_1\big(0,\zeta_*(0),\zeta_*(\nu_1(0))\big)\big]\bigg)+\big(\delta I+\mu_y^b\big)^{-1}\bigg(\int_{0}^{b} \varphi(y)dw(y)\\&&+\int_0^bC_{r}(b-y)f_1\big(y,\zeta_*(y),\zeta_*(\nu_1(y))\big)dy-\int_0^bP_{r}(b-y)f_2\big(y,\zeta_*(y),\zeta_*(\nu_2(y))\big)dy\\&&-\sum \limits_{p=1}^nC_{r}(b-x_p)\hat{I}_p(\zeta_*(x_p)) -\sum \limits_{p=1}^nS_{r}(b-x_p)\hat{J}_p(\zeta_*(x_p))\bigg)\bigg\}dy\\
	&=&\displaystyle\int_{0}^{x}P_{r}(x-y)\int_{0}^{y} \varrho(y-\tau)g_*(\tau)dw(\tau)dy\\&&-\displaystyle\int_{0}^{x}P_{r}(x-y)\mathcal{B} \mathcal{B}^*P^*_{r}(b-y)\big(\delta I+\mu_y^b\big)^{-1}\bigg(\int_{0}^{b}P_{r}(b-y)\int_{0}^{y} \varrho(y-\tau)g_*(\tau)dw(\tau)dy\bigg)dy,
\end{eqnarray*}
for some $g_* \in S_{\mathcal{G},\zeta_*}.$ Therefore, $\mathcal{K}$ has a closed graph.\\
\noindent Hence, $\mathcal{K}$ is completely continuous multivalued map, u.s.c. with convex closed values. To apply Theorem~\ref{lm4}, we need the following step.\\
\textbf{Step 5.} We show that the set $\mathcal{Y}$ is bounded, where
\begin{eqnarray*}
	\mathcal{Y}=\big\{\zeta \in \mathcal{PC}^1_b: \lambda \zeta \in \mathcal{K}\zeta~ \mbox{for some} ~\lambda>1\big\}.
\end{eqnarray*}
Let $\zeta \in \mathcal{Y}.$ In addition, $\lambda \zeta \in \mathcal{K}\zeta$ for some $\lambda>1.$ So, there exists $g\in S_{\mathcal{G},\zeta}$ such that
	\begin{eqnarray*}
	\zeta(x)&=&\lambda^{-1}C_{r}(x)[\phi(0)+h(\zeta)]+\lambda^{-1}S_{r}(x)\big[\xi+f_1\big(0,\zeta(0),\zeta(\nu_1(0))\big)\big]\\&& -\lambda^{-1}\int_{0}^{x}C_{r}(x-y)f_1\big(y,\zeta(y),\zeta(\nu_1(y))\big)dy\nonumber\\&&  +\lambda^{-1}\int_{0}^{x}P_{r}(x-y)\mathcal{B} u(y)dy +\lambda^{-1}\int_{0}^{x}P_{r}(x-y)f_2\big(y,\zeta(y),\zeta(\nu_2(y))\big)dy\nonumber\\&& +\lambda^{-1}\int_{0}^{x}P_{r}(x-y)\int_{0}^{y} \varrho(y-\tau)g(\tau)dw(\tau)dy +\lambda^{-1}\sum \limits_{0 < x_p< x}C_{r}(x-x_p)\hat{I}_p(\zeta(x_p))\nonumber\\&& +\lambda^{-1}\sum \limits_{0 < x_p< x}S_{r}(x-x_p)\hat{J}_p(\zeta(x_p)).
\end{eqnarray*}
Using the hypotheses, we have
\begin{eqnarray*}
	E\left\|\zeta(x)\right\|^2&\leq&8E\left\|\lambda^{-1}C_{r}(x)[\phi(0)+h(\zeta)]\right\|^2+8E\left\|\lambda^{-1}S_{r}(x)\big[\xi+f_1\big(0,\zeta(0),\zeta(\nu_1(0))\big)\big]\right\|^2 \\&&+8E\left\|\lambda^{-1}\displaystyle\int_{0}^{x}C_{r}(x-y)f_1\big(y,\zeta(y),\zeta(\nu_1(y))\big)dy\right\|^2\\&&+8E\left\|\lambda^{-1}\displaystyle\int_{0}^{x}P_{r}(x-y)\mathcal{B} u_\delta (y)dy\right\|^2\\&& +8E\left\|\lambda^{-1}\displaystyle\int_{0}^{x}P_{r}(x-y)f_2\big(y,\zeta(y),\zeta(\nu_2(y))\big)dy\right\|^2\\&&+8E\left\|\lambda^{-1}\displaystyle\int_{0}^{x}P_{r}(x-y)\int_{0}^{y} \varrho(y-\tau)g(\tau)dw(\tau)dy\right\|^2 \\&&+8E\Bigg\|\lambda^{-1}\sum \limits_{0<x_p<x}C_{r}(x-x_p)\hat{I}_p(\zeta(x_p))\Bigg\|^2\\&&+8E\Bigg\|\lambda^{-1}\sum \limits_{0<x_p<x}S_{r}(x-x_p)\hat{J}_p(\zeta(x_p))\Bigg\|^2\\	&\leq& 16 M_1\big[E\|\phi(0)\|^2+L_h(1+q)\big]+16M_2\big[E\|\xi\|^2+k_2\big]+8M_1bk_1\big(1+2q\big)\\&&+\frac{64}{\delta}M_3^2M_4^2b\bigg[2E\|\bar{\zeta}_b\|^2+2C_2\int_{0}^{b} E\left\|\varphi(y)\right\|^2_{L_2^0}dy+2M_1\big[E\|\phi(0)\|^2\\&&\quad+L_h(1+q)\big]+2M_2\big[E\|\xi\|^2+k_2\big]+M_1bk_1\big(1+2q\big)+M_3\int_0^b\Theta(y)\pounds(2q)dy\\&&\quad+C_2M_3\ell\beta(2q)\int_0^b\wp(y)dy+\sum \limits_{p=1}^nM_1L_{\hat{I}_p}+\sum \limits_{p=1}^nM_2L_{\hat{J}_p}\bigg]\\&&+8M_3\pounds(2q)\int_0^x\Theta(y)dy+8C_2M_3\ell\beta(2q)\displaystyle\int_{0}^{x}\wp(y) dy\\&&+8\sum \limits_{0<x_p<x}M_1L_{\hat{I}_p}+8\sum \limits_{0<x_p<x}M_2L_{\hat{J}_p}.\end{eqnarray*}
 $\implies \mathcal{Y}$ is bounded on $[0,b].$\\
\noindent Hence, it follows from Theorem~\ref{lm4} that $\mathcal{K}$ has a fixed point $\zeta \in \mathcal{PC}^1_b.$
\end{proof}

\begin{theorem}\label{th3} (Non-Convex Case)
	If the hypotheses (A1)-(A4), (A7-A8), (A9)(i), (A10)-(A11) are satisfied, then the system (\ref{1.1}) has at least one mild solution, provided that
	\begin{eqnarray}\label{eq5}
		\Lambda\bigg(1+\frac{6}{\delta}M_3^2M_4^2b\bigg)<1,
	\end{eqnarray}  where \begin{eqnarray*}
		\Lambda=14\bigg[M_1L_h+2L_{f_1}(M_2+M_1b)+2M_3bL_{f_2}+C_2\displaystyle \int_0^b M_3\ell\hat\wp(y)dy+\sum \limits_{p=1}^nM_1L_{\hat{I}_p}+\sum \limits_{p=1}^nM_2L_{\hat{J}_p}\bigg].\end{eqnarray*}
\end{theorem}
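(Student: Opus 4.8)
The plan is to apply the Covitz--Nadler fixed point theorem (Lemma~\ref{lm5}) to the multivalued operator $\mathcal{K}$ on the complete metric space $\mathcal{PC}^1_b$, in place of the Martelli-type theorem (Theorem~\ref{lm4}) used in the convex case. Accordingly I must verify two things: that $\mathcal{K}\zeta\in P_{cl}(\mathcal{PC}^1_b)$ for every $\zeta$, and that $\mathcal{K}$ is a contraction with constant $\Lambda\big(1+\tfrac{6}{\delta}M_3^2M_4^2b\big)$, which is strictly less than $1$ by~(\ref{eq5}). Convexity of the values of $\mathcal{G}$ is no longer available, so the essential change from Theorem~\ref{th1} is to replace the upper-semicontinuity/condensing arguments by the Lipschitz estimate (A11) in the Hausdorff metric together with a measurable selection.

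First I would check that $\mathcal{K}$ has closed values. For fixed $\zeta$, the set $S_{\mathcal{G},\zeta}$ is nonempty because $\mathcal{G}(\cdot,\zeta(\cdot),\zeta(\nu_3(\cdot)))$ is measurable with compact values by (A10), so a measurable selection exists. To see that $\mathcal{K}\zeta$ is closed, take $\eta_m\in\mathcal{K}\zeta$ with $\eta_m\to\eta$ in $\mathcal{PC}^1_b$; each $\eta_m$ is built from a selection $g_m\in S_{\mathcal{G},\zeta}$. Using the compactness of the values of $\mathcal{G}$ together with the growth bound $d\big(0,\mathcal{G}(x,0,0)\big)\le\hat\wp(x)$ from (A11), the sequence $(g_m)$ is bounded in $L^2$ and admits a subsequence converging to some $g\in S_{\mathcal{G},\zeta}$; passing to the limit in the defining formula, using the continuity of $C_r,S_r,P_r$ and Lemma~\ref{lm1} for the stochastic convolution, yields $\eta\in\mathcal{K}\zeta$.

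The central step is the contraction estimate, and its delicate part is the measurable selection. Fix $\zeta_1,\zeta_2$ and $\eta_1\in\mathcal{K}\zeta_1$ with associated selection $g_1\in S_{\mathcal{G},\zeta_1}$. I would introduce the multivalued map
\[
U(x)=\Big\{w\in \mathcal{G}\big(x,\zeta_2(x),\zeta_2(\nu_3(x))\big):\ \|g_1(x)-w\|^2\le \hat\wp(x)\,\|\zeta_1-\zeta_2\|^2\Big\},
\]
which is nonempty by (A11) (the Hausdorff bound forces the ball around $g_1(x)$ to meet the target value of $\mathcal{G}$) and measurable with closed values, so a measurable selection $g_2\in S_{\mathcal{G},\zeta_2}$ exists. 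Defining $\eta_2\in\mathcal{K}\zeta_2$ through $g_2$ with the same control $u_\delta$ structure, I would estimate $E\|\eta_1(x)-\eta_2(x)\|^2$ term by term: the neutral and nonlinear terms via (A2)(i), (A3)(iii) and (A8)(i), the impulses via (A7), and the stochastic convolution via Lemma~\ref{lm1} with (A4) and (A11); the control term contributes the extra factor $\tfrac{6}{\delta}M_3^2M_4^2b$ through (A9)(i). Collecting the constants, using $\|\sum_{i=1}^{N}a_i\|^2\le N\sum_i\|a_i\|^2$ and $\|\zeta_1(\nu_i(x))-\zeta_2(\nu_i(x))\|^2\le\|\zeta_1-\zeta_2\|^2_{\mathcal{PC}^1_b}$, gives
\[
H_d\big(\mathcal{K}\zeta_1,\mathcal{K}\zeta_2\big)\le \Lambda\Big(1+\tfrac{6}{\delta}M_3^2M_4^2b\Big)\|\zeta_1-\zeta_2\|^2_{\mathcal{PC}^1_b}.
\]

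By interchanging the roles of $\zeta_1$ and $\zeta_2$ the symmetric bound holds, so $\mathcal{K}$ is a contraction in view of~(\ref{eq5}). Lemma~\ref{lm5} then furnishes a fixed point $\zeta\in\mathcal{PC}^1_b$ of $\mathcal{K}$, which by Definition~\ref{def4} is a mild solution of~(\ref{1.1}). I expect the construction of $g_2$ (verifying that $U$ has nonempty, closed, measurable values and extracting a selection) to be the main obstacle, since the remaining estimates reduce to the same Hölder and Itô-type computations already carried out in the convex case, now driven by the Lipschitz constants rather than growth bounds.
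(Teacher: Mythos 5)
Your proposal is correct and follows essentially the same route as the paper: Covitz--Nadler (Lemma~\ref{lm5}) applied to $\mathcal{K}$, with closedness of values checked first and the contraction estimate obtained via the auxiliary multimap $U(x)$ (the paper's $S(x)$) to produce the measurable selection $g_2\in S_{\mathcal{G},\zeta_2}$, followed by the same term-by-term estimates. The only difference is bookkeeping of constants: the paper's own computation lands on $2\Lambda\big(1+\tfrac{6}{\delta}M_3^2M_4^2b\big)$ rather than the $\Lambda\big(1+\tfrac{6}{\delta}M_3^2M_4^2b\big)$ you state, so if anything your version is the one consistent with the hypothesis~(\ref{eq5}).
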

\begin{proof}
Since $S_{\mathcal{G},\zeta}\neq \emptyset$. Therefore, $\mathcal{G}$ has a nonempty measurable selection. We shall show that the operator $\mathcal{K}$ defined in Theorem \ref{th1} satisfies the assumption of Lemma~\ref{lm5}. The proof is as follows.\\
\textbf{Step 1.} $\mathcal{K}(\zeta)\in P_{cl}({\mathcal{PC}^1_b}) $ for each $\zeta \in {\mathcal{PC}^1_b}.$\\
Indeed, let $(\eta_n), n\geq 0 \in \mathcal{K}(\zeta)$ such that $\eta_n \to \eta.$ Then, $\eta \in {\mathcal{PC}^1_b}$ and there exists $g_n\in S_{\mathcal{G},\zeta_n}$ such that for each $x\in [0,b], \eta_n(x)$ is defined in (\ref{eq4}). Using (A11), we have for a.e. $x \in[0,b]$
$$\|g_n(x)\|^2\leq \hat \wp(x)+\hat\wp(x)\|\zeta\|^2.$$
The Lebesgue dominated convergence theorem implies that
$$\|g_n-g\|_{L_2^0}\to 0~\mbox{as}~ n \to \infty.$$
Hence, $g\in S_{\mathcal{G},\zeta}.$ Then, for each $x\in[0,b],~\eta_n(x) \to \eta(x),$ where 
\begin{eqnarray*}
	\eta(x)&=&C_{r}(x)[\phi(0)+h(\zeta_1)]+S_{r}(x)\big[\xi+f_1\big(0,\zeta_1(0),\zeta_1(\nu_1(0))\big)\big] \\&&-\displaystyle\int_{0}^{x}C_{r}(x-y)f_1\big(y,\zeta_1(y),\zeta_1(\nu_1(y))\big)dy+\displaystyle\int_{0}^{x}P_{r}(x-y)\mathcal{B} u_{\delta,1} (y)dy\\&& +\displaystyle\int_{0}^{x}P_{r}(x-y)f_2\big(y,\zeta_1(y),\zeta_1(\nu_2(y))\big)dy+\displaystyle\int_{0}^{x}P_{r}(x-y)\int_{0}^{y} \varrho(y-\tau)g(\tau)dw(\tau)dy \\&&+\sum \limits_{0<x_p<x}C_{r}(x-x_p)\hat{I}_p(\zeta_1(x_p))+\sum \limits_{0<x_p<x}S_{r}(x-x_p)\hat{J}_p(\zeta_1(x_p)).
\end{eqnarray*}
So, $\eta \in \mathcal{K}(\zeta).$\\
\textbf{Step 2.} There exists $\gamma<1$ such that $H_d\big(\mathcal{K}(\zeta_1), \mathcal{K}(\zeta_2)\big) \leq \gamma \|\zeta_1-\zeta_2\|_{\mathcal{PC}^1_b}$ for each $\zeta_1, \zeta_2 \in {\mathcal{PC}^1_b}.$ Let $\zeta_1, \zeta_2 \in {\mathcal{PC}^1_b}$ and $\eta \in \mathcal{K}(\zeta).$ Then, there exists $g_1\in S_{\mathcal{G},\zeta_1} $ such that $\eta(x)$ is defined as above. From (A11), it follows that
	$$H_d\big((\hat{\mathcal{G}}\zeta_1)(x),(\hat{\mathcal{G}}\zeta_2)(x)\big)\leq \hat\wp(x)\|\zeta_1(x)-\zeta_2(x)\|^2.$$
	Hence, there exists $g_2 \in S_{\mathcal{G},\zeta_2}$ such that $$\displaystyle\int_{0}^{x}\|g_1(x)-g_2(x)\|^2dx\leq \hat\wp(x)[\|\zeta_1(x)-\zeta_2(x)\|^2],\quad x\in[0,b].$$
	Consider the map $S:[0,b]\to P(\mathcal{H})$ defined by
	\begin{eqnarray*}S(x)=\bigg\{g_2(x)|g_2:[0,b] \to \mathcal{H} ~\mbox{is Lebesgue integrable and}~\displaystyle\int_{0}^{x}\|g_1(x)-g_2(x)\|^2dx\\\leq \hat\wp(x)\|\zeta_1(x)-\zeta_2(x)\|^2\bigg\}.
	\end{eqnarray*}
	Since the multivalued operator $S(x)\cap(\hat{\mathcal{G}}\zeta_2)(x) $ is measurable, there exists a function $\bar g_1(x)$ which is a measurable selection for $S.$ So, $\bar g_1(x) \in S_{\mathcal{G},\zeta_2},$ and for each $x\in[0,b],$
	 $$\displaystyle\int_{0}^{x}\|g_1(x)-\bar g_1(x)\|^2 dx\leq \hat\wp(x)\|\zeta_1(x)-\zeta_2(x)\|^2.$$
	 Let us define 
	 \begin{eqnarray*}
	 	\bar\eta(x)&=&C_{r}(x)[\phi(0)+h(\zeta_2)]+S_{r}(x)\big[\xi+f_1\big(0,\zeta_2(0),\zeta_2(\nu_1(0))\big)\big] \\&&-\displaystyle\int_{0}^{x}C_{r}(x-y)f_1\big(y,\zeta_2(y),\zeta_2(\nu_1(y))\big)dy+\displaystyle\int_{0}^{x}P_{r}(x-y)\mathcal{B} u_{\delta,2} (y)dy\\&& +\displaystyle\int_{0}^{x}P_{r}(x-y)f_2\big(y,\zeta_2(y),\zeta_2(\nu_2(y))\big)dy+\displaystyle\int_{0}^{x}P_{r}(x-y)\int_{0}^{y} \varrho(y-\tau)\bar g(\tau)dw(\tau)dy \\&&+\sum \limits_{0<x_p<x}C_{r}(x-x_p)\hat{I}_p(\zeta_2(x_p))+\sum \limits_{0<x_p<x}S_{r}(x-x_p)\hat{J}_p(\zeta_2(x_p)).
	 \end{eqnarray*}
 Then, for each $x\in[0,b]$, we get\\
 $ E\|\eta(x)-\bar\eta(x)\|^2$
  \begin{eqnarray*}
&\leq&14\|C_{r}(x)\|^2E\|h(\zeta_1)-h(\zeta_2)\|^2\\&&+14\|S_{r}(x)\|^2E\big\|f_1\big(0,\zeta_1(0),\zeta_1(\nu_1(0))\big)-f_1\big(0,\zeta_2(0),\zeta_2(\nu_1(0))\big)\big\|^2 \\&&+14\displaystyle\int_{0}^{x}\|C_{r}(x-y)\|^2E\big\|f_1\big(y,\zeta_1(y),\zeta_1(\nu_1(y))\big)-f_1\big(y,\zeta_2(y),\zeta_2(\nu_1(y))\big)\big\|^2dy\\&&+14\displaystyle\int_{0}^{x}\|P_{r}(x-y)\|^2\|\mathcal{B}\|^2E\| u_{\delta,1}-u_{\delta,2}\|^2 (y)dy\\&& +14\displaystyle\int_{0}^{x}\|P_{r}(x-y)\|^2E\big\|f_2\big(y,\zeta_1(y),\zeta_1(\nu_2(y))\big)-f_2\big(y,\zeta_2(y),\zeta_2(\nu_2(y))\big)\big\|^2dy\\&&+14C_2\displaystyle\int_{0}^{x}\|P_{r}(x-y)\|^2\int_{0}^{y} |\varrho(y-\tau)|^2\|g(\tau)-\bar g(\tau)\|^2_{L_2^0}d\tau dy \\&&+14\sum \limits_{0<x_p<x}\|C_{r}(x-x_p)\|^2\|\hat{I}_p(\zeta_1(x_p))-\hat{I}_p(\zeta_2(x_p))\|^2\\&&+14\sum \limits_{0<x_p<x}\|S_{r}(x-x_p)\|^2\|\hat{J}_p(\zeta_1(x_p))-\hat{J}_p(\zeta_2(x_p))\|^2\\
 \\&\leq & 14M_1L_h\|\zeta_1-\zeta_2\|^2+28M_2L_{f_1}\|\zeta_1-\zeta_2\|^2+28M_1bL_{f_1}\|\zeta_1-\zeta_2\|^2\\&&+14\displaystyle\int_{0}^{x}\|P_{r}(x-y)\|^2\|\mathcal{B}\|^2E\| u_{\delta,1}-u_{\delta,2}\|^2 (y)dy\\&&+28M_3bL_{f_2}\|\zeta_1-\zeta_2\|^2+14C_2\displaystyle\int_{0}^{x}M_3\ell \hat\wp(y)\|\zeta_1-\zeta_2\|^2 dy\\&&+14\sum \limits_{0<x_p<x}M_1L_{\hat{I}_p}\|\zeta_1-\zeta_2\|^2+14\sum \limits_{0<x_p<x}M_2L_{\hat{J}_p}\|\zeta_1-\zeta_2\|^2\\&\leq&
 14\bigg[M_1L_h+2L_{f_1}(M_2+M_1b)+2M_3bL_{f_2}+C_2\displaystyle \int_0^x M_3\ell\hat\wp(y)dy\\&&\quad+\sum \limits_{0<x_p<x}M_1L_{\hat{I}_p}+\sum \limits_{0<x_p<x}M_2L_{\hat{J}_p}\bigg]\|\zeta_1-\zeta_2\|^2\\&&+\frac{84}{\delta}M_3^2M_4^2b\bigg[M_1L_h+2L_{f_1}(M_2+M_1b)+2M_3bL_{f_2}+C_2\displaystyle \int_0^b M_3\ell\hat\wp(y)dy\\&&\quad+\sum \limits_{p=1}^nM_1L_{\hat{I}_p}+\sum \limits_{p=1}^nM_2L_{\hat{J}_p}\bigg]\|\zeta_1-\zeta_2\|^2\\
 &\leq& 2 \Lambda\bigg(1+\frac{6}{\delta}M_3^2M_4^2b\bigg)\|\zeta_1-\zeta_2\|^2,
 \end{eqnarray*}
where \begin{eqnarray*}
	\Lambda=14\bigg[M_1L_h+2L_{f_1}(M_2+M_1b)+2M_3bL_{f_2}+C_2\displaystyle \int_0^b M_3\ell\hat\wp(y)dy+\sum \limits_{p=1}^nM_1L_{\hat{I}_p}+\sum \limits_{p=1}^nM_2L_{\hat{J}_p}\bigg].\end{eqnarray*}
From the analogous relation obtained by interchanging the roles of $\zeta_1$ and $\zeta_2$, it follows that
\begin{eqnarray*}
	H_d\big(\mathcal{K}(\zeta_1),\mathcal{K}(\zeta_2)\big)&\leq&2 \Lambda\bigg(1+M_3^2M_4^2\frac{b^2}{\delta}\bigg)\|\zeta_1-\zeta_2\|^2.
\end{eqnarray*}
From (\ref{eq5}), $\mathcal{K}$ is a contraction and thus, by Lemma~\ref{lm5}, $\mathcal{K}$ has a fixed point which is the solution to (\ref{1.1}) on $[0,b]$.
			\end{proof}
\begin{theorem}\label{th2}
	If (A1)-(A9) hold, and $f_1, f_2$ and $g$ are uniformly bounded, then the system (\ref{1.1}) is approximately controllable on $[0,b]$.
\end{theorem}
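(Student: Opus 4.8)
The plan is to follow the standard resolvent-based scheme for approximate controllability, using the control $u_\delta$ defined above together with condition (A9)(ii). First I would invoke Theorem~\ref{th1}: under (A1)--(A8) and (A9)(i) the operator $\mathcal{K}$ (now assembled with $u_\delta$) has a fixed point $\zeta^\delta\in\mathcal{PC}^1_b$, i.e.\ a mild solution driven by the control $u_\delta$, with an associated selection $g\in S_{\mathcal{G},\zeta^\delta}$. Fix an arbitrary target $\bar{\zeta}_b\in L^2(\Upsilon_b,\mathcal{H})$; the goal is to prove $E\|\zeta^\delta(b)-\bar{\zeta}_b\|^2\to 0$ as $\delta\to 0^+$, which is exactly approximate controllability on $[0,b]$.

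The central computation is to evaluate $\zeta^\delta(b)$. Substituting $u_\delta$ into the mild-solution formula (\ref{2.11}) at $x=b$, invoking the definition of $\mu_0^b$ and the resolvent identity $\mu_0^b(\delta I+\mu_0^b)^{-1}=I-\delta(\delta I+\mu_0^b)^{-1}$, and using the representation $\bar{\zeta}_b=E\bar{\zeta}_b+\int_0^b\varphi(y)\,dw(y)$, the terminal value collapses to
\begin{align*}
\zeta^\delta(b)=\bar{\zeta}_b-\delta(\delta I+\mu_0^b)^{-1}\big[E\bar{\zeta}_b-C_r(b)(\phi(0)+h(\zeta^\delta))-S_r(b)(\xi+f_1(0,\zeta^\delta(0),\zeta^\delta(\nu_1(0))))\big]-\delta(\delta I+\mu_y^b)^{-1}\Phi^\delta,
\end{align*}
where $\Phi^\delta$ collects the remaining inhomogeneous terms, namely $\int_0^b\varphi\,dw$, $\int_0^b C_r(b-y)f_1\,dy$, $\int_0^b P_r(b-y)f_2\,dy$, the stochastic convolution $\int_0^b P_r(b-y)\int_0^y\varrho g\,dw\,dy$, and the two impulse sums.

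Then I would bound $E\|\zeta^\delta(b)-\bar{\zeta}_b\|^2$ by a constant times the squared norms of the two $\delta(\delta I+\mu^b)^{-1}$-terms. Since $\|\delta(\delta I+\mu^b)^{-1}\|\le 1$ by (A9)(i), the uniform-boundedness hypothesis on $f_1,f_2,g$ (together with (A7) for the impulses and Lemma~\ref{lm1} for the stochastic convolution) makes every integrand square-integrable with a bound independent of $\delta$. By weak sequential compactness of bounded sets in the relevant $L^2$ spaces, along a subsequence one has $f_1(\cdot,\zeta^\delta,\cdot)\rightharpoonup f_1^*$, $f_2(\cdot,\zeta^\delta,\cdot)\rightharpoonup f_2^*$ and $g\rightharpoonup g^*$ weakly. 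For each inhomogeneous term I would add and subtract its weak limit: the difference part is controlled by $\|\delta(\delta I+\mu^b)^{-1}\|\le 1$ times a quantity that tends to $0$ because the compact operators $C_r,S_r,P_r$ (compactness from (A1)) turn the weak convergence into strong convergence; the limit part is a fixed element of $\mathcal{H}$, on which $\delta(\delta I+\mu^b)^{-1}\to 0$ strongly by (A9)(ii). Hence every term vanishes as $\delta\to 0^+$, giving $E\|\zeta^\delta(b)-\bar{\zeta}_b\|^2\to 0$, and since $\bar{\zeta}_b$ was arbitrary, (\ref{1.1}) is approximately controllable on $[0,b]$.

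I expect the main obstacle to be the passage to the limit in precisely those terms where the resolvent factor $\delta(\delta I+\mu^b)^{-1}$ is applied to data that themselves depend on $\delta$ through $\zeta^\delta$. Because $\delta(\delta I+\mu^b)^{-1}\to 0$ only in the strong (not the norm) operator topology, one cannot push the limit through directly; the splitting into a weakly convergent part (annihilated by compactness of $C_r,S_r,P_r$) and a fixed-limit part (annihilated by (A9)(ii)) is exactly the device that rescues the argument. Extra care is needed so that the stochastic convolution is first estimated via Lemma~\ref{lm1} before the weak limit $g^*$ is extracted, and so that the $\delta$-dependence hidden in $h(\zeta^\delta)$ and $f_1(0,\zeta^\delta(0),\zeta^\delta(\nu_1(0)))$ inside the first bracket is absorbed by the same strong-convergence mechanism.
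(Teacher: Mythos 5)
Your proposal follows essentially the same route as the paper's proof: fix a mild solution $\zeta^\delta$ driven by $u_\delta$, use the resolvent identity to collapse $\zeta^\delta(b)$ into $\bar{\zeta}_b$ minus terms carrying the factor $\delta(\delta I+\mu^b)^{-1}$, extract weakly convergent subsequences from the uniformly bounded nonlinearities, and kill each term by combining the compactness of $C_r, S_r, P_r$ (weak-to-strong upgrade) with (A9)(ii) and the uniform bound $\|\delta(\delta I+\mu^b)^{-1}\|\le 1$. Your write-up is, if anything, more explicit than the paper's about why the add-and-subtract splitting is needed, but the argument is the same.
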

\begin{proof}
	Let $\zeta^\delta(\cdot) \in \mathcal{Z}_q$ be a fixed point of the operator $\mathcal{K}$. From Theorem \ref{th1}, any fixed point of $\mathcal{K}$ is a mild solution of the system (\ref{1.1}). This means that there is $\zeta^\delta\in \mathcal{K}(\zeta^\delta) $ with $g^\delta \in S_{\mathcal{G}, \zeta^\delta}$ such that
	\begin{eqnarray*}
		\zeta^\delta(x)&=&\bar\zeta_b- \delta \big(\delta I+\mu_0^b\big)^{-1}\bigg(E\bar{\zeta}_b
		-C_{r}(b)[\phi(0)+h(\zeta^\delta)]-S_{r}(b)\big[\xi+f_1\big(0,\zeta^\delta(0),\zeta^\delta(\nu_1(0))\big)\big]\bigg)\\&&\quad+\int_{0}^{b}\delta \big(\delta I+\mu_y^b\big)^{-1} \varphi(y)dw(y)+\int_0^b\delta \big(\delta I+\mu_y^b\big)^{-1}C_{r}(b-y)f_1\big(y,\zeta^\delta(y),\zeta^\delta(\nu_1(y))\big)dy\\&&\quad-\int_0^b\delta \big(\delta I+\mu_y^b\big)^{-1}P_{r}(b-y)f_2\big(y,\zeta^\delta(y),\zeta^\delta(\nu_2(y))\big)dy\\&&\quad-\displaystyle\int_{0}^{b}\delta \big(\delta I+\mu_y^b\big)^{-1}P_{r}(b-y)\int_{0}^{y} \varrho(y-\tau)g^\delta(\tau)dw(\tau)dy\\&&\quad-\delta\big(\delta I+\mu_y^b\big)^{-1}\sum \limits_{p=1}^nC_{r}(b-x_p)\hat{I}_p(\zeta^\delta(x_p)) -\delta \big(\delta I+\mu_y^b\big)^{-1}\sum \limits_{p=1}^nS_{r}(b-x_p)\hat{J}_p(\zeta^\delta(x_p)).
	\end{eqnarray*}
In view of the unifrom boundedness of $f_1, f_2$ and $g$, there are subsequences still denoted by $f_1\big(y,\zeta^\delta(y),\zeta^\delta(\nu_1(y))\big),$ $f_2\big(y,\zeta^\delta(y),\zeta^\delta(\nu_2(y))\big)$ and $g^\delta(\tau)$, which converge weakly to $f_1(y), f_2(y)$ and $g(\tau).$ Moreover, the operator $\delta\big(\delta I+\mu_0^b\big)^{-1} \rightarrow 0$ strongly as $\delta\rightarrow 0^+$ and $\big\|\delta\big(\delta I+\mu_0^b\big)^{-1}\big\| \leq 1$. Hence, from the Lebesgue-dominated convergence theorem and the compactness of $C_r(x)$ and $S_r(x), x>0,$ yield
	\begin{eqnarray*}
	E\left\|\zeta^\delta(x)-\bar\zeta_b\right\|^2&\leq& 	10E\Big\|\delta \big(\delta I+\mu_0^b\big)^{-1}\Big(E\bar{\zeta}_b
	-C_{r}(b)[\phi(0)+h(\zeta^\delta)]\\&&\quad\quad-S_{r}(b)\big[\xi+f_1\big(0,\zeta^\delta(0),\zeta^\delta(\nu_1(0))\big)\big]\Big)\Big\|^2\\&&
	+10C_2E\bigg(\int_{0}^{b} \big\|\delta \big(\delta I+\mu_y^b\big)^{-1}\varphi(y)\big\|_{L_2^0}^2dy\bigg)\\&&
	+10E\bigg(\int_0^b \big\|\delta \big(\delta I+\mu_y^b\big)^{-1}C_{r}(b-y)\big[f_1\big(y,\zeta^\delta(y),\zeta^\delta(\nu_1(y))\big)-f_1(y)\big]\big\|dy\bigg)^2\\&&
	+10E\bigg(\int_0^b \big\|\delta \big(\delta I+\mu_y^b\big)^{-1}C_{r}(b-y)f_1(y)\big\|dy\bigg)^2\\&&+10E\bigg(\int_0^b\big\|\delta \big(\delta I+\mu_y^b\big)^{-1}P_{r}(b-y)\big[f_2\big(y,\zeta^\delta(y),\zeta^\delta(\nu_2(y))\big)-f_2(y)\big]\big\|dy\bigg)^2\\&&+10E\bigg(\int_0^b\big\|\delta \big(\delta I+\mu_y^b\big)^{-1}P_{r}(b-y)f_2(y)\big\|dy\bigg)^2\\&&+10C_2E\bigg(\displaystyle\int_{0}^{b}\Big\|\delta \big(\delta I+\mu_y^b\big)^{-1}P_{r}(b-y)\int_{0}^{y} \varrho(y-\tau)\big[g^\delta(\tau)-g(\tau)\big]\Big\|d\tau dy\bigg)^2\\&&+10C_2E\bigg(\displaystyle\int_{0}^{b}\Big\|\delta \big(\delta I+\mu_y^b\big)^{-1}P_{r}(b-y)\int_{0}^{y} \varrho(y-\tau)g(\tau)\Big\|d\tau dy\bigg)^2\\&&+10E\bigg(\Big\|\delta\big(\delta I+\mu_y^b\big)^{-1}\sum \limits_{p=1}^nC_{r}(b-x_p)\hat{I}_p(\zeta^\delta(x_p))\Big\|\bigg)^2 \\&&+10E\bigg(\Big\|\delta \big(\delta I+\mu_y^b\big)^{-1}\sum \limits_{p=1}^nS_{r}(b-x_p)\hat{J}_p(\zeta^\delta(x_p))\Big\|\bigg)^2 \to 0~\mbox{as}~ \delta \to 0^+.
\end{eqnarray*}
Thus, $\zeta^\delta(x) \to\bar\zeta_b$ as $\delta \to 0^+$ and this shows that the system (\ref{1.1}) is approximately controllable. 
\end{proof}
\begin{remark}
	The case when  $f_1\big(x,\zeta(x),\zeta(\nu_1(x))\big)=\sigma(x,\zeta_x),$ $f_2\big(x,\zeta(x),\zeta(\nu_2(x))\big)=F(x,\zeta_x),$ $\mathcal{G}\big(x,\zeta(x),\zeta(\nu_3(x))\big)={G}(x,\zeta_x)$ and $h(\zeta)(x)=q(\zeta_{x_1},\zeta_{x_2},\ldots,\zeta_{x_m})$ with integer-order derivative instead of Caputo fractional derivative has been discussed in \cite{SU2022}.
\end{remark}
\begin{remark}
	The case when $f_1\big(x,\zeta(x),\zeta(\nu_1(x))\big)=0,$ $\mathcal {A}\zeta(x)=\mathcal {A}^{C}{D}^{p}\zeta(x),~(0<p<1),$ $f_2\big(x,\zeta(x),\zeta(\nu_2(x))\big)=F(x,\zeta(x)),$ $\mathcal{G}\big(x,\zeta(x),\zeta(\nu_3(x))\big)={G}(x,\zeta(x))$ has been studied in finite dimensional space in \cite{SB2018} without nonlocal and impulsive conditions.
\end{remark}
\section{\textbf{Application}}
\textbf{Example.} Consider the following system
\begin{eqnarray}  \label{5.1}
	\left \{ \begin{array}{lll} \partial_x^{\frac{4}{3}}\big[\zeta(x,z)+\tilde f_1\big(x,\zeta(x,z),\zeta(x-1,z)\big)\big]\in \dfrac{\partial^2\zeta (x,z)}{\partial x^2}+\tilde f_2\big(x,\zeta(x,z),\zeta(x-1,z)\big) \\\hspace{0.5 cm}+\mathcal{B} u(x,z)+\displaystyle\int_{0}^{x}|x-s|\tilde{\mathcal{G}}\big(s,\zeta(s,z),\zeta(s-1,z))\big)dw(s),\quad
	 z \in [0,\pi],~  x\in [0,1],
		&\\\zeta(x,z)=\phi(x,z)+h(\zeta), ~ \dfrac{\partial \zeta(x,z)}{\partial x}\Big\rvert_{x=0}=2+z, \quad x\in [-1,0], ~z\in [0,\pi],
		&\\ \Delta \zeta(x,z)\Big\rvert_{x=x_p}=\displaystyle\int_{-1}^{x_p}  \beta_p(x_p-s)\zeta(s,z)ds,\quad p=1,2,\ldots,n,
			&\\ \Delta \dfrac{\partial \zeta(x,z)}{\partial x}\Big\rvert_{x=x_p}=\displaystyle\int_{-1}^{x_p} \hat \beta_p(x_p-s)\zeta(s,z)ds,\quad p=1,2,\ldots,n,
		&\\\zeta(x,0)=\zeta(x,\pi)=0, \quad x \in [0,1],
	\end{array}\right.
\end{eqnarray}
where $\partial_x^{\frac{4}{3}}$ represents fractional partial derivative of Caputo type and $0=x_0<x_1<x_2<\cdots<x_n<x_{n+1}=1$ are impulse points. $w(x)$ represents $\mathcal{W}$-valued standard Wiener process on a complete probability space $(\Omega, \Upsilon,\Upsilon_x, P)$ with usual conditions and $u$ is a control function. $\beta_p, \hat \beta_p$ are continuous and satisfy Lipschitz condition. However, $h$ can be chosen in such a way that assumption (A8) holds. \\
Let $\mathcal{H}=\mathcal{W}=\mathcal{L}^2[0,\pi].$  Let the operator 
$A: \mathcal{H}\rightarrow \mathcal{H}$
be defined by
$A\zeta(x)=\dfrac{\partial^2 \zeta(x)}{\partial x^2}$
with $$D(A)=\big\{\zeta \in \mathcal{H} : \zeta, \zeta'~ \mbox{are absolutely continuous},~ \zeta'' \in \mathcal{H}~~ \mbox{and}~\zeta(0)=\zeta(\pi)=0\big\}.$$ 
The operator $A$ has discrete spectrum with normalized eigenvectors $e_n(z)=\sqrt{\frac{2}{\pi}}\sin (nz) $ corresponding to the eigenvalues $\lambda_n=-n^2,$ where $n \in \mathbb{N}.$ Moreover, $\{e_n:n \in \mathbb{N}\}$ forms an orthonormal basis for $\mathcal{H}.$ Thus, we have $$A\zeta=\sum \limits_{n \in \mathbb{N}}{-n^2} \langle \zeta,e_n \rangle e_n, ~ \zeta\in D(A).$$ 
$A$ generates a strongly continuous cosine family given by
$$C(x)\zeta=\sum \limits_{n \in \mathbb{N}}\cos(nx)\langle \zeta,e_n \rangle e_n,$$
and that the associated sine family is given by
$$S(x)\zeta=\sum \limits_{n \in \mathbb{N}}\frac{1}{n}\sin(nx)\langle \zeta,e_n \rangle e_n.$$
\noindent
From the subordinate principle \cite[Theorem 3.1]{{B2001}}, it follows that $A$ generates a strongly continuous exponentially bounded fractional cosine family $C_r(x),$ such that $C_r(0)=I$ and
\begin{eqnarray}\label{c1}
	C_r(x)=\int_0^\infty \psi_{x,\frac{r}{2}}(s)C(s)ds, \quad x>0,
\end{eqnarray}
where $\psi_{x,\frac{r}{2}}(s)=x^{-\frac{r}{2}}\chi_{\frac{r}{2}}(sx^{-\frac{r}{2}})$ and 
 $$\chi_{\rho}(z)=\sum\limits_{n=0}^{\infty}\frac{(-z)^n}{n!\Gamma(-\rho n+1-\rho)},~~ 0<\rho<1.$$

\noindent Define 
$$\mathcal{U}=\bigg\{u:u=\sum\limits_{n=2}^{\infty}u_n e_n~\Big\rvert \sum\limits_{n=2}^{\infty}u_n ^{2}<\infty\bigg\},$$
with the norm
$\|u\|=\bigg(\sum\limits_{n=2}^{\infty}u_n ^{2}\bigg)^{1/2}.$\\
Define the operator $\mathcal{B}:\mathcal{U}\rightarrow \mathcal{H}$ by $\mathcal{B}u=\big(\mathcal{B}u\big)(x),$ where $\mathcal{B} \in \mathcal{L(U,H)}$ such that $\mathcal{B}u(x)=2u_2(x)e_1(z)+\sum\limits_{n=2}^{\infty}u_n(x)e_n(z).$ \\
Next, we verify the hypotheses for the system (\ref{5.1}) one by one.\\
\noindent\textbf{Verification of A1:}\\
From (\ref{c1}), $C_r(x)$ is compact and  undoubtedly, $\|C_r\|^2\leq 1.$ Moreover, there exists $M_4>0$ such that $\|\mathcal{B}\|^2\leq M_4.$\\
\noindent\textbf{Verification of A2:}\\
\noindent Assume that $f_1\big(x,\zeta(x),\zeta(\nu_1(x))\big)(z)=\tilde f_1\big(x,\zeta(x,z),\zeta(x-1,z)\big).$\\
Define the operator $\tilde f_1:[0,1]\times\mathcal{PC}\times \mathcal{PC}_{0} \to \mathcal{H},$ by\\
$\tilde f_1\big(x,\zeta(x,z),\zeta(x-1,z)\big)=\dfrac{2e^{-x}\sin x}{\sqrt{3}+|\zeta(x,z)|+|\zeta(x-1,z)|}$.\\
Therefore,
\begin{eqnarray*}
	&&E\|\tilde f_1\big(x,\zeta(x,z),\zeta(x-1,z)\big)-\tilde f_1\big(x,\tilde \zeta(x,z),\tilde\zeta(x-1,z)\big)\|^2\\&&\quad=E\bigg\|\frac{2e^{-x}\sin x}{\sqrt{3}+|\zeta(x,z)|+|\zeta(x-1,z)|}-\frac{2e^{-x}\sin x}{\sqrt{3}+|\tilde\zeta(x,z)|+|\tilde\zeta(x-1,z)|}\bigg\|^2\\&&\quad\le2E\Big(\|\zeta(x,z)-\tilde\zeta(x,z)\|^2+\|\zeta(x-1,z)-\tilde\zeta(x-1,z)\|^2\Big).
\end{eqnarray*}
\noindent\textbf{Verification of A3:}\\
Define $\tilde f_2\big(x,\zeta(x,z),\zeta(x-1,z)\big)=\dfrac{|\zeta(x,z)|+|\zeta(x-1,z)|}{5\pi+|\zeta(x,z)|+|\zeta(x-1,z)|}$.\\
Thus, there exists $L_{\tilde {f_2}}>0$ such that
\begin{eqnarray*}
	&&E\|\tilde f_2\big(x,\zeta(x,z),\zeta(x-1,z)\big)-\tilde f_2\big(x,\tilde \zeta(x,z),\tilde\zeta(x-1,z)\big)\|^2\\&& \quad\le L_{\tilde {f_2}}E\Big(\|\zeta(x,z)-\tilde\zeta(x,z)\|^2+\|\zeta(x-1,z)-\tilde\zeta(x-1,z)\|^2\Big).
\end{eqnarray*}
\noindent\textbf{Verification of A4:}\\
Define 
$\varrho:[0,1] \to \mathbb{R}$ be such that $\varrho(x-s)=|x-s|.$ Therefore, $\|\varrho(x-s)\|^2\le \ell,$ where $\ell>0$.
\noindent\textbf{Verification of A5:}\\
Define 
\begin{eqnarray}\label{q001}
\tilde{\mathcal{G}}\big(x,\zeta(x),\zeta(\nu_3(x))\big)(z)&=&\tilde{\mathcal{G}}\big(x,\zeta(x,z),\zeta(x-1,z)\big)\notag \\
&=&\big\{g\in \mathcal{H}: g_1\big(x,\zeta(x,z),\zeta(x-1,z)\big) \leq g \leq g_2\big(x,\zeta(x,z),\zeta(x-1,z)\big)\big\},\notag\\
\end{eqnarray}
where $g_1, g_2:[0,1] \times \mathcal{PC}\times \mathcal{PC}_{0} \rightarrow P_{bd,cl,cv}\big(L(\mathcal{W},\mathcal{H})\big).$ We assume that for each $x\in[0,1]$, $g_1$ is lower semi continuous and $g_2$ is upper semi continuous. \\
\noindent\textbf{Verification of A6:}\\
Assume that there exists $\wp\in L^1([0,1], \mathbb{R}^+)$ and $\Theta:\mathbb{R}^+ \to (0,\infty)$ is continuous increasing function such that
\begin{eqnarray}\label{q002}
	\max\bigg\{ \int_0^x E\|g_1\big(x,\zeta(x,z),\zeta(x-1,z))\big)\|^2,\int_0^x E\|g_2\big(x,\zeta(x,z),\zeta(x-1,z))\big)\|^2\bigg\}\notag \\\le \wp(x)\Theta\Big(E\|\zeta(x,z)\|^2+E\|\zeta(x-1,z)\|^2\Big).\hspace*{5.4cm}
\end{eqnarray}
\noindent\textbf{Verification of A7:}\\
From (\ref{5.1}),
$$\Delta \zeta(x,z)\Big\rvert_{x=x_p}=\displaystyle\int_{-1}^{x_p}  \beta_p(x_p-s)\zeta(s,z)ds,\quad p=1,2,\ldots,n$$
and $$\Delta \dfrac{\partial \zeta(x,z)}{\partial x}\Big\rvert_{x=x_p}=\displaystyle\int_{-1}^{x_p} \hat \beta_p(x_p-s)\zeta(s,z)ds,\quad p=1,2,\ldots,n,$$
and we consider the function $\hat{I}_p, \hat{J}_p:\mathcal{H}\rightarrow \mathcal{H}$ such that 
the required conditions are fulfilled.\\

If we define $\zeta(x)(z)=\zeta(x,z),$ $u(x)(z)=u(x,z),$ then system (\ref{5.1}) can be written as in the abstract form (\ref{1.1}). If all the hypotheses of Theorem~\ref{th1} are satisfied, then the system (\ref{5.1}) admits a mild solution.

\section{ \textbf{Conclusion}}
The existence and approximate controllability results for a class of nonlocal fractional neutral stochastic integrodifferential inclusions of order $1<\alpha<2$ with impulses are studied. Sufficient conditions for the existence of mild solutions are given for the cases when the multivalued map is convex and non-convex. Fractional calculus, cosine and sine families, stochastic analysis, and fixed point theorems are all applied to achieve the desired outcomes. Next, we derived sufficient conditions for the approximate controllability of the system. Finally, we illustrated the practicality of the finding using a theoretical scenario.

Second-order differential equations, for instance, can
be used to analyze the system of dynamical buckling of a hinged extensible beam \cite{S1950,WE1982}. The integrated process in continuous time, which can be made stationary, is best
described by second-order stochastic differential equations. For instance, engineers
can make use of second-order stochastic differential inclusions to describe mechanical
vibrations or the charge on a capacitor. On the other hand, in
control problems, we look for a control that steers the system from a given initial state to a prescribed trajectory or a desired final state. Moreover, while launching a rocket into
space, it may be desirable to have a precise path along with the preferred destination
for cost-effectiveness and collision avoidance. The efficiency of describing the real-life
phenomena by the fractional differential equation are more accurate than by classical differential
equations due to their nonlocal property. In the future, we plan to examine optimal controllability for non-instantaneous impulsive stochastic integrodifferential evolution inclusions of order $1<\alpha<2$.\\

\noindent
\textbf{Acknowledgements:} The first and third authors acknowledge UGC, India, for providing financial support through MANF F.82-27/2019 (SA-III)/ 4453 and F.82-27/2019 (SA-III)/191620066959, respectively.\\
\textbf{Competing Interest:} None.

\end{document}